\newcommand{\reference}{}
\newtheorem{thm}{Theorem}[section]
\newtheorem*{thmref}{Theorem~\reference}
\newtheorem*{propref}{Proposition~\reference}
\newtheorem*{corolref}{Corollary~\reference}
\newtheorem{lemma}[thm]{Lemma}
\newtheorem{prop}[thm]{Proposition}
\theoremstyle{definition}
\newtheorem{defi}[thm]{Definition}
\theoremstyle{remark}
\newtheorem{remark}[thm]{Remark}
\newcommand{\CP}{\mathbb{CP}}
\DeclareMathOperator{\tr}{tr}
\DeclareMathOperator{\End}{End}
\DeclareMathOperator{\id}{id}
\DeclareMathOperator{\Rm}{Rm}
\DeclareMathOperator{\Ric}{Ric}
\renewcommand{\div}{\dive}
\DeclareMathOperator{\dive}{div}
\DeclareMathOperator{\vol}{vol}
\renewcommand{\phi}{\varphi}
\renewcommand{\epsilon}{\varepsilon}
\renewcommand{\bar}{\overline}
\renewcommand{\tilde}{\widetilde}
\newcommand{\lieder}{\mathcal L}
\newcommand{\de}{\partial}
\newcommand{\Reeb}{\xi}
\newcommand{\I}{f}
\newcommand{\eps}{\epsilon}
\renewcommand{\o}{\omega}
\title[Second variation for Legendrians in pseudo-Sasakian manifolds]{Second variation for L-minimal Legendrian submanifolds in pseudo-Sasakian manifolds}
\author{David Petrecca \and Lars Sch\"afer}
\address{Institut f\"ur Differentialgeometrie, Leibniz Universit\"at Hannover, Welfengarten 1, Hanover, Germany.} \email{petrecca at math.uni-hannover.de, schaefer at math.uni-hannover.de}
\subjclass[2010]{53D12 (primary) \and 53C25 \and 53A10 (secondary)}
\keywords{Legendrian submanifolds \and minimal submanifolds \and pseudo-Sasakian manifolds \and Lorentzian manifolds \and Legendrian stability}
\begin{document}

\maketitle

\begin{abstract}
In this paper we provide the second variation formula for L-minimal Lagrangian submanifolds in a pseudo-Sasakian manifold. 
We apply it to the case of Lorentzian-Sasakian manifolds and relate the L-stability of L-minimal Legendrian submanifolds in a Sasakian manifold $M$ to their L-stability in an associated Lorentzian-Sasakian structure on $M$.
\end{abstract}

\section*{Introduction}

Let $(M, g)$ be a Riemannian manifold. If $f:L \rightarrow M$ is a Riemannian submanifold, then it is called \emph{minimal} if $t=0$ is a critical point of the volume functional for all deformations $f_t: L \rightarrow M$ with $t \in (-\eps, \eps)$ and $f_0 = f$. Equivalently, $L$ is minimal if and only if its mean curvature vector vanishes. The submanifold is called \emph{stable} if $t=0$ is actually a minimum, that is if the second derivative of the volume functional at $t=0$ is nonnegative.

The explicit expressions of the first and second derivatives of the volume are standard and can be found, for instance, in \cite{simons}.

When $(M, \omega)$  is K\"ahler of real dimension $2n$, it is natural to study the above problem restricted to minimal \emph{Lagrangian} submanifolds, namely $n$-dimensional submanifolds that are minimal in the Riemannian geometric sense and where $\omega$ vanishes.

Let us restrict ourselves to deformations that keep $L$ Lagrangian, namely such that $f_t^* \omega = 0$. Infinitesimally this can be seen in the fact that $\lieder_X \omega = d(\iota_X \omega) = 0$, where $X$ is the normal component of the derivative of $f_t$. These deformations are called \emph{Lagrangian}.

In \cite{Oh_InvMath}, Oh has introduced the notion of \emph{Hamiltonian stability}. A minimal Lagrangian submanifold is Hamiltonian stable (H-stable) if its volume is a minimum among all infinitesimal \emph{Hamiltonian} deformations, namely given infinitesimally by normal fields $X$ such that $\iota_X \omega$ is \emph{exact}, i.e. Hamiltonian vector fields.

He then computes the Jacobi operator of a minimal Lagrangian submanifold and applies his second variation formula to provide a stability criterion for a submanifold $L$ in a K\"ahler-Einstein ambient in terms of the first eigenvalue $\lambda_1(L)$ of the Laplacian on $L$. Namely $L$ is H-stable if, and only if, $\lambda_1(L)$ is greater or equal than the Einstein constant of $M$.

There are several examples of minimal H-stable submanifolds of $\CP^n$ or other Hermitian symmetric spaces that are not stable in the usual sense. A survey of results and techniques, mostly for the homogeneous case, can be found in Ohnita's paper \cite{Ohnita_surv}.

A slight generalization of minimal Lagrangian submanifolds are \emph{H-minimal} ones, namely Lagrangians that extremize the volume under all \emph{Hamiltonian} variations or, equivalently, if the mean curvature vector is $L^2$-orthogonal to all Hamiltonian vector fields, see \cite{Oh_Hmin}.

The odd dimensional counterpart of K\"ahler geometry is Sasakian geometry, that merges together Riemannian, contact and CR structures. A natural contact geometric object analogous to Lagrangian submanifolds are Legendrian submanifolds.

A submanifold $f: L^n \rightarrow (M^{2n+1}, \eta)$ of a contact manifold is \emph{Legendrian} if $f^*\eta = 0$ and a deformation $f_t$ that preserves the Legendre condition is called \emph{Legendrian}. Infinitesimally, it translates into having a variation field that is a contactomorphism.

Oh's notion of H-stability is here replaced by Legendrian stability (L-stability), namely when the second derivative of the volume functional is nonnegative for all contact vector fields.

The computation of the second variation of minimal Legendrian submanifolds in Sasakian manifolds has been provided by Ono \cite{Ono}, along as a stability criterion -- for a $\eta$-Sasaki-Einstein ambient -- in terms of the spectrum of the Laplacian. Namely if the ambient Ricci tensor satisfies $\Ric = A g + (2n-A) \eta \otimes \eta$, then  $L$ is L-stable if, and only if, $\lambda_1(L) \geq A+2$, the K\"ahler-Einstein constant of the transverse metric.

Using Ono's expression of the second variation and the known properties of the Jacobi operator, Calamai and the first author \cite{calapet} were able to construct eigenfunctions of the Laplacian with eigenvalue $A+2$, under the assumption of the presence of nontrivial Sasaki ambient automorphisms.

Ono's work has been generalized by Kajigaya \cite{Kajigaya}, who has introduced the notion of L-minimal Legendrian submanifolds, namely the ones that are stationary points of the volume under Legendrian deformations and computed their second variation. In this case, a criterion involving the spectrum of the Laplacian cannot be provided in dimension (of the ambient manifold) greater than three.

The minimality condition extends of course to the pseudo-Riemannian setting and is treated in Anciaux's monograph \cite{anciaux_book}. The compatible combination of a pseudo-Riemannian metric and a complex structure leads to the notion of pseudo-K\"ahler structures that, being symplectic, allow us to speak about Lagrangian submanifolds. A, up to a certain point, similar structure of symplectic pseudo-Riemann\-ian manifold is given by para-K\"ahler ones, for which we refer to \cite{survey_AMT}, \cite{paracomplex_survey96}.

The study of the Hamiltonian stability of minimal Lagrangian in pseudo- and para-K\"ahler manifolds has been done by Anciaux and Georgiou \cite{anc_geo}, where they compute the second variation of such submanifolds and give a stability criterion analogous to Oh's in case these are space-like.

In this paper we treat the analogous problem for pseudo-Sasakian manifolds. These structures have been introduced by Takahashi in \cite{Takahashi} and consist in normal almost contact structures endowed with compatible pseudo-Riemannian metrics.

Our main result is the following (see Section \ref{sec:legendrian}).
\renewcommand{\reference}{\ref{thm:2var}}
\begin{thmref}
 Let $L$ be a L-minimal Legendrian submanifold, possibly with boundary $\partial L$, of a pseudo-Sasakian manifold $(M, \eta, \xi, g,\phi, \eps)$ with $\eps = |\xi|^2 = \pm 1$.
 
 Then, in the normal Legendrian direction $V = f \xi + \frac 1 2 \phi \nabla f$ vanishing on $\partial L$, the second variation of the volume is
\begin{align*}
 \frac{d^2}{dt^2}\biggr |_{t=0} \vol(L_t) = \frac 1 4 \int_L \biggl \{ 	&(\Delta f)^2 -2 \eps |\nabla f|^2 - \bar \Ric(\phi \nabla f, \phi \nabla f) \\
									&-2 g(H, h(\nabla f, \nabla f) + g(H, \phi \nabla f)^2  \biggr \} dv_0
\end{align*}
where $H$ is the mean curvature vector, $\bar \Ric$ is the Ricci tensor of $(M, g)$ and $dv_0$ is the volume form of $(L, g)$.
\end{thmref}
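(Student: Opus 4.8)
The plan is to compute the second variation directly from the general second‑variation formula for the volume functional of a (pseudo‑)Riemannian immersion, then specialize to the given Legendrian variation field. I would begin by recalling the standard formula
\[
\frac{d^2}{dt^2}\Big|_{t=0}\vol(L_t)=\int_L\Big\{|\nabla^\perp V^\perp|^2-\bar\Ric(V^\perp,V^\perp)-\langle V^\perp,h\rangle^2+\langle V^\perp,H\rangle^2+\text{(tangential/divergence terms)}\Big\}\,dv_0,
\]
valid for a general variation with field $V$, where $V^\perp$ is the normal part and the tangential part contributes only boundary terms on an L‑minimal submanifold (its interior contribution vanishes precisely because $H$ is $L^2$‑orthogonal to Legendrian fields). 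Because the prescribed field $V=f\xi+\tfrac12\phi\nabla f$ is already normal for a Legendrian $L$ (both $\xi$ and $\phi\nabla f$ are normal to $L$ by the Legendre condition and the compatibility of $\phi$), I would not have to separate a tangential piece from $V$ itself; the boundary condition $f|_{\partial L}=0$ kills the remaining boundary integrals.

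The core of the argument is to translate each term of the abstract formula into the intrinsic data $f,\nabla f,\Delta f$ on $L$ together with $\bar\Ric$ and the second fundamental form $h$. First I would use the pseudo‑Sasakian structure equations — in particular $\nabla\xi=-\eps\,\phi$ (up to the sign conventions fixed by $\eps=|\xi|^2$) and the formula for $\nabla\phi$ in terms of $g,\eta,\xi$ — to compute the normal connection $\nabla^\perp V$ acting on $V=f\xi+\tfrac12\phi\nabla f$. This is where the characteristic Sasakian cancellations occur: differentiating $f\xi$ produces $(\nabla f)\xi+f\nabla\xi$, and the $\phi\nabla f$ term of $V$ is designed so that the tangential parts conspire to leave a clean normal derivative. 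Expanding $|\nabla^\perp V|^2$ should then yield the $(\Delta f)^2$ term (from the trace of the Hessian of $f$), the $-2\eps|\nabla f|^2$ term (from the $\eps$ in $\nabla\xi$ together with $|\xi|^2=\eps$), and a cross term that will recombine with the curvature and second‑fundamental‑form contributions.

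Next I would handle the curvature and extrinsic terms. The term $\bar\Ric(V,V)$ expands, using $V=f\xi+\tfrac12\phi\nabla f$ and the pseudo‑Sasakian identity $\bar\Ric(\xi,\cdot)=2n\,\eps\,\eta(\cdot)$ (so $\xi$ is a Ricci eigendirection), into $\tfrac14\bar\Ric(\phi\nabla f,\phi\nabla f)$ plus terms proportional to $f^2$ and $f\,\eta(\cdot)$ that must cancel against contributions from $|\nabla^\perp V|^2$; getting this cancellation to produce exactly the stated $-\tfrac14\bar\Ric(\phi\nabla f,\phi\nabla f)$ with no leftover $f^2$ term is the most delicate bookkeeping. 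For the extrinsic terms, I would compute $\langle V,h(e_i,e_j)\rangle$ and $\langle V,H\rangle$ against an orthonormal (frame‑sign‑adjusted) basis of $TL$; using $h(\nabla f,\nabla f)$ and the identity $\langle\phi X,h(Y,Z)\rangle$ being totally symmetric on a Legendrian submanifold (a standard consequence of $\phi$‑compatibility and $\nabla\eta$), the $\langle V,h\rangle^2$ and $\langle V,H\rangle^2$ terms collapse to $-2g(H,h(\nabla f,\nabla f))$ and $+g(H,\phi\nabla f)^2$ respectively.

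I expect the main obstacle to be the precise tracking of the sign $\eps$ and the pseudo‑Riemannian frame signs throughout: in the Lorentzian or general pseudo‑Sasakian case one cannot assume an orthonormal basis with all $+1$ inner products, so each contraction carries an indefinite‑signature factor, and the cancellation of the unwanted $f^2$ and mixed terms depends sensitively on these signs being handled consistently with $\eps=|\xi|^2=\pm1$. Once the structure equations are fixed with compatible sign conventions, the remaining computation is a careful but routine expansion, and the factor $\tfrac14$ emerges naturally from the $\tfrac12$ in the definition of $V$ squared against itself.
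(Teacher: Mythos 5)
Your overall strategy (differentiate the first variation, plug in $V=f\xi+\tfrac12\phi\nabla f$, use the structure equations) matches the paper's, but the proposal contains several concrete errors that would derail the computation. First, the curvature term in the general second variation formula is the \emph{partial} trace $\sum_i\eps_i\bar\Rm(e_i,V,e_i,V)$ over a tangent frame of $L$, not $-\bar\Ric(V,V)$; those agree only in codimension one. The whole point of the paper's computation is to convert this partial trace into the full ambient Ricci by completing $\{e_i\}$ to the frame $\{e_i,\phi e_i,\xi\}$ of $TM$, then handling the $\phi e_i$-block via the K\"ahler-like curvature symmetry $\sum_i\eps_i\bar\Rm(\phi e_i,V_H,\phi e_i,V_H)=\sum_i\eps_i\bar\Rm(e_i,\phi V_H,e_i,\phi V_H)$ (a consequence of \eqref{eq_RmPhi}) followed by the Gauss equation. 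This step is entirely absent from your outline, and without it you cannot produce the intrinsic term $\Ric(\nabla f,\nabla f)$, nor the cancellation of $g(A_V,A_V)$, nor the extrinsic term $-g(H,h(\phi V_H,\phi V_H))$. Relatedly, your claim that $(\Delta f)^2$ comes from ``the trace of the Hessian'' inside $|\nabla^\perp V|^2$ is wrong: that term yields $\tfrac14|\nabla^2f|^2$, the full Hessian norm, and one only reaches $(\Delta f)^2$ after combining it with the intrinsic $\Ric(\nabla f,\nabla f)$ via the integrated Bochner formula $\int_L(\Delta f)^2=\int_L(\Ric(\nabla f,\nabla f)+|\nabla^2f|^2)$. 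Since the Ricci term is exactly what your missing Gauss-equation step supplies, these two gaps compound.

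Second, the extrinsic terms are misattributed. The quantity $\langle V,h\rangle^2=\sum_{i,j}\eps_i\eps_j g(h(e_i,e_j),V)^2$ is a sum of squares and cannot ``collapse'' to the term $-2g(H,h(\nabla f,\nabla f))$, which is linear in $H$; in the paper $-g(A_V,A_V)$ cancels identically against a term produced by the Gauss equation, while $-2g(H,h(\nabla f,\nabla f))$ arises from two separate sources: the term $-g(\bar\nabla_VV,H)$ in the raw second variation (shown by a dedicated Legendrian computation to equal $-\tfrac14 g(h(\nabla f,\nabla f),H)$) and the term $-g(H,h(\phi V_H,\phi V_H))$ from the Gauss equation. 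Your proposal dismisses $g(\bar\nabla_VV,H)$ as a boundary/tangential contribution killed by L-minimality, but $\bar\nabla_VV$ is not a Legendrian variation field, so $L^2$-orthogonality of $H$ to Legendrian fields does not apply; since $L$ is only L-minimal and $H\neq0$ in general, this interior term must be computed explicitly, as the paper does. These are genuine missing ideas, not just bookkeeping.
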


In the special case when $L$ is minimal ($H=0$) and $g$ is $\eta$-Einstein ($\bar \Ric = A g + (2n+\eps A) \eta \otimes \eta$), the formula above simplifies to
\begin{align*}
 \frac{d^2}{dt^2}\biggr |_{t=0} \vol(L_t)	&= \frac 1 4 \int_L \biggl \{  |\Delta f|^2 - (A+2 \eps)|\nabla f|^2 \biggr \} dv_0
\end{align*}
and we are able to give the following stability criterion in case $L$ is space-like.
\renewcommand{\reference}{\ref{prop:critlambda1}}
\begin{propref} 
 The minimal space-like Legendrian $L$ in the pseudo-Sasaki $\eta$-Einstein manifold $M$ is Legendrian stable if and only if its first eigenvalue of the Laplacian on functions $\lambda_1(L)$ satisfies
 \begin{equation} \label{est:lambda1}
  \lambda_1(L) \geq A+2 \eps.
 \end{equation}
\end{propref}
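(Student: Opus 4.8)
The plan is to reduce the stability question to a spectral problem for the Laplacian, exploiting that the simplified second variation in the minimal $\eta$-Einstein case is a manifestly diagonalizable quadratic form in the deformation function $f$.

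\textbf{Setup.} Because $L$ is space-like, the induced metric $f^*g$ is positive definite, so $(L,f^*g)$ is a genuine Riemannian manifold and its Laplacian $\Delta$ is a self-adjoint elliptic operator. On the compact $L$ (with the boundary condition imposed by $V|_{\partial L}=0$ when $\partial L\neq\emptyset$) it therefore has discrete spectrum $0=\lambda_0<\lambda_1\le\lambda_2\le\cdots\to\infty$ together with a complete $L^2$-orthonormal eigenbasis $\{u_k\}$, $\Delta u_k=\lambda_k u_k$, with the geometer's sign so that the $\lambda_k$ are nonnegative and $\int_L|\nabla u|^2\,dv_0=\int_L u\,\Delta u\,dv_0$. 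I emphasize that this is exactly where the space-like hypothesis is indispensable: without it the induced metric is indefinite, $\Delta$ fails to be elliptic, and neither the discreteness of the spectrum nor the very meaning of $\lambda_1(L)$ survives.

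\textbf{Diagonalization.} Starting from the simplified formula
\[
 \frac{d^2}{dt^2}\Big|_{t=0}\vol(L_t)=\frac14\int_L\bigl\{(\Delta f)^2-(A+2\eps)|\nabla f|^2\bigr\}\,dv_0,
\]
I would expand an arbitrary admissible $f=\sum_k a_k u_k$ in the eigenbasis. Orthonormality together with one integration by parts (whose boundary term vanishes under the imposed boundary condition) give $\int_L(\Delta f)^2\,dv_0=\sum_k\lambda_k^2 a_k^2$ and $\int_L|\nabla f|^2\,dv_0=\sum_k\lambda_k a_k^2$, whence
\[
 \frac{d^2}{dt^2}\Big|_{t=0}\vol(L_t)=\frac14\sum_{k\ge0}\lambda_k\bigl(\lambda_k-(A+2\eps)\bigr)a_k^2 .
\]
The $k=0$ term vanishes (constants correspond to the neutral Reeb direction $V=f\xi$), so only the indices $k\ge1$, for which $\lambda_k>0$, control the sign.

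\textbf{Conclusion.} If $\lambda_1(L)\ge A+2\eps$, then $\lambda_k\ge\lambda_1\ge A+2\eps$ for every $k\ge1$, each summand $\lambda_k(\lambda_k-(A+2\eps))a_k^2$ is nonnegative, and the second variation is $\ge0$ for all admissible $f$; thus $L$ is L-stable. Conversely, arguing by contraposition, if $\lambda_1(L)<A+2\eps$ I would test with the single eigenfunction $f=u_1$, which furnishes an admissible Legendrian variation field vanishing appropriately on $\partial L$, obtaining $\tfrac14\lambda_1(\lambda_1-(A+2\eps))<0$ since $\lambda_1>0$; this exhibits a destabilizing direction and shows $L$ is not L-stable. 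The only genuine subtleties I foresee are bookkeeping ones: confirming that the integration by parts produces no boundary contribution under $V|_{\partial L}=0$, and that $u_1$ really yields an admissible contact deformation so that it is a legitimate competitor. The analytic core is the elementary observation that $t\mapsto t(t-(A+2\eps))$ is nonnegative on the whole spectrum precisely when the spectral gap $\lambda_1$ clears the threshold $A+2\eps$.
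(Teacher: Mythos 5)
Your proposal is correct and is precisely the argument the paper intends: it invokes "the same argument used in the Sasakian and K\"ahler case," namely the $L^2$-orthogonal eigenspace decomposition of $C^\infty(L)$ made available by the space-like hypothesis, applied to the simplified second variation formula. You have merely written out in full the spectral diagonalization and the test with $u_1$ that the paper leaves implicit.
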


For $\eps = 1$ we reobtain Ono's formula and stability criterion.

Concerning usual stability in the pseudo-Riemannian case, it is known that every minimal submanifold is always unstable if the ambient metric is indefinite on its tangent or normal bundle, see \cite[Thm.~37]{anciaux_book}. In contrast, we have the following.
\renewcommand{\reference}{\!\!}
\begin{corolref}
 If $A+2\eps \leq 0$, then every minimal Legendrian submanifold is Legendrian stable. In particular this holds in the pseudo-Sasaki-Einstein case (with $A=-2n$).
\end{corolref}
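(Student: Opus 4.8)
The plan is to read the statement straight off the simplified second variation formula in the minimal, $\eta$-Einstein case. Under these hypotheses the second variation along the Legendrian field $V=f\xi+\tfrac12\phi\nabla f$ reduces to
$$\frac{d^2}{dt^2}\biggr|_{t=0}\vol(L_t)=\frac14\int_L\Bigl\{(\Delta f)^2-(A+2\eps)|\nabla f|^2\Bigr\}\,dv_0,$$
and, by definition, $L$ is Legendrian stable exactly when this number is nonnegative for every admissible $f$ (vanishing on $\partial L$ when a boundary is present). It therefore suffices to check that the integrand is pointwise nonnegative as soon as $A+2\eps\le 0$.

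First I would note that $(\Delta f)^2$ is a square, hence $\ge 0$. For the remaining term I would use that $L$ is space-like, so that its induced metric is positive definite and $|\nabla f|^2=g(\nabla f,\nabla f)\ge 0$ everywhere. When $A+2\eps\le 0$ the factor $-(A+2\eps)$ is $\ge 0$, so $-(A+2\eps)|\nabla f|^2\ge 0$ as well; adding the two nonnegative contributions gives a nonnegative integrand and thus a nonnegative integral for every $f$, which is precisely L-stability. Equivalently, the conclusion is a degenerate case of Proposition~\ref{prop:critlambda1}: for a compact space-like $L$ the first Laplace eigenvalue obeys $\lambda_1(L)>0\ge A+2\eps$, so the criterion \eqref{est:lambda1} is automatically satisfied.

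For the final assertion I would simply specialize to the pseudo-Sasaki-Einstein value $A=-2n$, where $A+2\eps=-2n+2\eps=-2(n-\eps)$. Since $n\ge 1$ and $\eps\in\{+1,-1\}$ we have $n-\eps\ge 0$, so $A+2\eps\le 0$ and the hypothesis of the corollary is met; in the Lorentzian-Sasakian case $\eps=-1$ one moreover has strict negativity. I would also recall that for $\eps=-1$ the Reeb field is timelike and the contact distribution $\ker\eta$ is positive definite, so every Legendrian submanifold is automatically space-like and the word ``every'' is literal there.

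I do not expect any real obstacle: once the simplified formula is available the corollary is a bookkeeping of signs. The only delicate point is the space-like hypothesis, which is what makes $|\nabla f|^2$ nonnegative and hence the integrand sign-definite; for an indefinite induced metric the term $-(A+2\eps)|\nabla f|^2$ would change sign and this pointwise argument would no longer close.
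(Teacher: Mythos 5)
Your argument is correct and is exactly the paper's: the corollary is read off pointwise from the simplified second variation $\frac14\int_L\{(\Delta f)^2-(A+2\eps)|\nabla f|^2\}\,dv_0$, using that $(\Delta f)^2\geq 0$ and that $|\nabla f|^2\geq 0$ because $L$ carries a positive definite induced metric, so $A+2\eps\leq 0$ forces a nonnegative integrand. Your added remarks (the degenerate case of Proposition~\ref{prop:critlambda1}, and the automatic space-likeness of Legendrians when $\eps=-1$) are sound but not needed; the core reasoning coincides with the paper's.
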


Then we focus on Lorentzian-Sasakian manifolds, namely when the signature is $(2n,1)$ and $\eps = -1$. They appeared in \cite{Baum}, \cite{Bohle} in the study of twistor and Killing spinors on Lorentzian manifolds. Later their study has been proposed in Sasakian geometry, see \cite{BGM_eta} or \cite[Sect.~11.8.1]{monoBG}. In particular it is proved in \cite{BGM_eta} that every negative Sasakian manifold admits a Lorentzian-Sasaki-Einstein metric and conversely.

In Subsection \ref{sec:tanno} we consider these deformations that map every Sasakian structure to a Lorentzian-Sasakian one. They generalize the well-known $D$-homothetic deformations of Tanno \cite{Tanno}.

We then prove that for every minimal Lagrangian submanifold $L$ in a Sasakian manifold $M$ is Legendrian stable if, and only if, it is in the associated Lorentzian-Sasakian structure on $M$.

%

 \section{Pseudo-Sasakian manifolds}
In this section we recall the definition and main properties of pseudo-Sasakian structures, following \cite{Takahashi}.

Let $M^{2n+1}$ be a differentiable manifold and let $\xi$ be a vector field on $M$, $\eta$ a $1$-form and $\phi$ a section of $\End(TM)$.
 
 Then the triple $(\xi, \eta, \phi)$ is an \emph{almost contact structure} if $\eta(\xi) = 1$ and $\phi^2 = - \id + \eta \otimes \xi$, see e.g. \cite{Blair}. If $g$ is a pseudo-Riemannian metric, then we have the following.
 
 \begin{defi}
  The tuple $(\xi, \eta, \phi, g, \eps)$ is an \emph{almost contact metric structure} if $(\xi, \eta, \phi)$ is an almost contact structure and the following compatibility relations hold
  
  \begin{enumerate}
  \item $g(\xi, \xi) = \eps \in \{ \pm 1 \}$;
  \item $\eta(X) = \eps g(\xi, X)$;
  \item $g(\phi X, \phi Y) = g(X,Y) - \eps \eta(X) \eta(Y)$.
  \end{enumerate}
 \end{defi}
 A tuple as above is a \emph{contact metric structure} if\footnote{Unlike Takahashi, in this paper we use the convention $d\eta(X,Y) = X \eta(Y) - Y \eta(X) - \eta([X,Y])$.} $d\eta = 2g(\phi \cdot, \cdot)$.
 \begin{defi}
  A contact metric structure is \emph{normal} or \emph{Sasakian} if
  \begin{equation} \label{defnabla}
   (\nabla_X \phi) Y = \eps \eta(Y)X-g(X,Y)\xi
  \end{equation}
where $\nabla$ is the Levi-Civita connection of $g$.
 \end{defi}
 For an almost contact metric structure we have the following.
 \begin{prop} \label{prop:nablaxi}
  If the identity \eqref{defnabla} holds, then $\nabla \xi = \eps \phi$, the field $\xi$ is Killing and the structure is contact metric.
 \end{prop}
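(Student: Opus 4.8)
The plan is to establish the three claims in the order stated, deriving first the formula $\nabla_X\xi = \eps\phi X$ and then reading the other two off from it. Throughout I would use the standard algebraic consequences of an almost contact structure, namely $\phi\xi = 0$ and $\eta\circ\phi = 0$, together with the compatibility relations (1)--(3).

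To obtain $\nabla_X\xi = \eps\phi X$ I would differentiate the identity $\phi\xi = 0$. Since $\nabla_X(\phi\xi) = (\nabla_X\phi)\xi + \phi(\nabla_X\xi) = 0$, and since \eqref{defnabla} with $Y=\xi$ gives $(\nabla_X\phi)\xi = \eps X - g(X,\xi)\xi = \eps(X - \eta(X)\xi)$ (using (2)), one finds $\phi(\nabla_X\xi) = -\eps(X - \eta(X)\xi)$. Applying $\phi$ once more and using $\phi^2 = -\id + \eta\otimes\xi$ together with $\phi\xi = 0$ yields $-\nabla_X\xi + \eta(\nabla_X\xi)\xi = -\eps\phi X$. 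The component along $\xi$ is killed because $g(\xi,\xi) = \eps$ is constant, so $0 = X\,g(\xi,\xi) = 2g(\nabla_X\xi,\xi)$ forces $\eta(\nabla_X\xi) = \eps g(\xi,\nabla_X\xi) = 0$; hence $\nabla_X\xi = \eps\phi X$.

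For the remaining two claims the key auxiliary fact is that $\phi$ is skew-adjoint, $g(\phi X, Y) = -g(X,\phi Y)$, which I would obtain by substituting $\phi Y$ for $Y$ in (3), expanding $\phi^2 Y = -Y + \eta(Y)\xi$, and using $g(\phi X,\xi) = \eps\eta(\phi X) = 0$. The Killing property is then immediate: $g(\nabla_X\xi, Y) + g(X,\nabla_Y\xi) = \eps\bigl(g(\phi X, Y) + g(\phi Y, X)\bigr) = 0$ by skew-symmetry, which is exactly $\lieder_\xi g = 0$. Finally, to check $d\eta = 2g(\phi\cdot,\cdot)$ I would expand $d\eta(X,Y) = X\eta(Y) - Y\eta(X) - \eta([X,Y])$ with $\eta = \eps g(\xi,\cdot)$; the terms containing $\nabla_X Y$ and $\nabla_Y X$ cancel, leaving $d\eta(X,Y) = \eps\bigl(g(\nabla_X\xi,Y) - g(\nabla_Y\xi,X)\bigr) = g(\phi X, Y) - g(\phi Y, X) = 2g(\phi X, Y)$, again by the first step and skew-symmetry.

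There is no deep obstacle here; the argument is entirely algebraic once $\nabla_X\xi = \eps\phi X$ is in hand. The one point demanding care is the first step, where the a priori unknown $\xi$-component $\eta(\nabla_X\xi)\xi$ must be shown to vanish and where one inverts $\phi$ on the contact distribution by applying it twice. Keeping track of the signs carried by $\eps$ throughout is the only thing that could realistically go wrong.
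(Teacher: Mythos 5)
Your proof is correct: the paper itself states Proposition \ref{prop:nablaxi} without proof (deferring to the standard literature following Takahashi), and your argument is the standard one — differentiating $\phi\xi=0$ and using \eqref{defnabla} with $Y=\xi$ to isolate $\phi(\nabla_X\xi)$, inverting $\phi$ on the contact distribution, and killing the $\xi$-component via constancy of $g(\xi,\xi)=\eps$, then deducing the Killing and contact metric conditions from the skew-adjointness $g(\phi X,Y)=-g(X,\phi Y)$. All the $\eps$ bookkeeping checks out, and the final computation of $d\eta$ is consistent with the paper's sign convention $d\eta(X,Y)=X\eta(Y)-Y\eta(X)-\eta([X,Y])$.
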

 In this paper we focus on a special kind of pseudo-Sasakian manifolds, namely Lorentzian Sasakian. They are characterized by their signature $(2n, 1)$ and $\eps = -1$, see e.g. \cite{Baum},\cite{Bohle}.
  
Before giving some properties of pseudo-Sasakian manifolds we need to fix
a sign convention for the curvature tensor $R$ of a connection $D$ on a vector
bundle $E  \rightarrow M$
\[
R(X,Y) \sigma =  D_X D_Y \sigma - D_Y D_X \sigma - D_{[X,Y]} \sigma \text{ for } X,Y \in \Gamma(TM) \text{ and } \sigma \in \Gamma(E).
\]

We have the following properties, some of them proved in \cite{schaefer} for the Lorentzian case.
\begin{lemma} 
Let $(M,g, \Reeb, \eta, \eps)$ be a pseudo-Sasakian manifold, then for $X,Y,Z \in TM$ one has 

\begin{align}
\phi^2 X				&= -X+\eta(X)\Reeb, \label{eq_Lem_Sas1} \\
(\bar \nabla_X \phi)Y 			&= -g(X,Y) \Reeb + \eps \eta(Y)X,  \label{eq_Lem_Sas1bis} \\
g(\phi X,Y)				&=- g(X, \phi Y),  \label{eq_Lem_Sas1bisbis} \\
\o(X,Y)					&= (\bar \nabla_X \eta) Y=g(\phi X, Y),  \label{eq_Lem_Sas2a} \\
(\bar \nabla_X \o)(Y,Z) 		&= \eps g(X,Z)\eta(Y) - \eps g(X,Y) \eta(Z), \label{eq_Lem_Sas3} \\ 
\bar\Rm(X,Y)\Reeb 			&= \eta(X)Y - \eta(Y)\, X, \label{eq_Lem_Sas6} \\ 
\bar\Rm(X,\xi,  \xi, Y) 		&= g(X,Y) -\eps \eta(X) \eta(Y),  \label{eq_Lem_Sas7}\\
\bar{\Ric}(\Reeb,\Reeb)			&= 2n, \label{Rici_in_Reeb} \\
\bar \Rm(X,Y) \phi Z			&= \phi \bar \Rm(X,Y)Z + \eps \biggl( -g(\phi Y, Z) X + g(\phi X, Z) Y \\ 
                                        &- g(Y,Z)\phi X + g(X,Z) \phi Y)\biggr) \label{eq_RmPhi}
\end{align}
where $\bar\Rm$ is the Riemann curvature tensor of $(M, g)$ and $\bar{\Ric}$ is its Ricci tensor.
\end{lemma}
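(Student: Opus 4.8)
The plan is to derive all nine identities from three inputs: the pointwise almost-contact axioms, the Sasakian defining relation \eqref{defnabla}, and its consequence $\bar\nabla\Reeb = \eps\phi$ from Proposition~\ref{prop:nablaxi} (here $\bar\nabla$ denotes the Levi-Civita connection). The purely algebraic identities come first and cost almost nothing. Equation \eqref{eq_Lem_Sas1} is just the structural relation $\phi^2 = -\id + \eta\otimes\Reeb$, and \eqref{eq_Lem_Sas1bis} is a transcription of \eqref{defnabla}. For \eqref{eq_Lem_Sas1bisbis} I would polarize the compatibility axiom $g(\phi X,\phi Y) = g(X,Y) - \eps\eta(X)\eta(Y)$: replacing $Y$ by $\phi Y$, then using \eqref{eq_Lem_Sas1} together with the standard consequences $\phi\Reeb = 0$ and $\eta\circ\phi = 0$ of the axioms, collapses the right-hand side and yields the skew-symmetry $g(\phi X, Y) = -g(X,\phi Y)$.

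Next I would handle the first-order identities. For \eqref{eq_Lem_Sas2a}, $\o(X,Y) = g(\phi X,Y)$ is the definition of the fundamental form, while for the other equality I compute $(\bar\nabla_X\eta)Y = X\eta(Y) - \eta(\bar\nabla_X Y)$, rewrite $\eta = \eps g(\Reeb,\cdot)$, and use metric compatibility to reduce it to $\eps g(\bar\nabla_X\Reeb, Y) = \eps g(\eps\phi X, Y) = g(\phi X, Y)$ since $\eps^2 = 1$. Identity \eqref{eq_Lem_Sas3} then follows by differentiating $\o(Y,Z) = g(\phi Y,Z)$: parallelism of $g$ gives $(\bar\nabla_X\o)(Y,Z) = g((\bar\nabla_X\phi)Y, Z)$, and substituting \eqref{eq_Lem_Sas1bis} together with $g(\Reeb,Z) = \eps\eta(Z)$ produces exactly the stated right-hand side.

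The curvature identities come from feeding $\bar\nabla\Reeb = \eps\phi$ into the curvature operator. For \eqref{eq_Lem_Sas6} I expand $\bar\Rm(X,Y)\Reeb$ in the fixed sign convention, write each $\bar\nabla_{\cdot}\Reeb = \eps\phi(\cdot)$, and distribute; the terms $\phi\bar\nabla_X Y - \phi\bar\nabla_Y X - \phi[X,Y]$ cancel because $\bar\nabla$ is torsion-free, leaving $\eps[(\bar\nabla_X\phi)Y - (\bar\nabla_Y\phi)X]$. Substituting \eqref{eq_Lem_Sas1bis} annihilates the symmetric $g(X,Y)\Reeb$ terms and reduces this to the antisymmetric expression $\eta(X)Y - \eta(Y)X$, whose overall sign is pinned down by the curvature convention. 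Specializing to $Y = \Reeb$ and pairing against a vector then gives \eqref{eq_Lem_Sas7} after using $g(\Reeb,\cdot) = \eps\eta(\cdot)$. Finally \eqref{Rici_in_Reeb} is the $\Reeb$-trace of \eqref{eq_Lem_Sas7} over a pseudo-orthonormal frame $\{e_i\}$ with signs $\eps_i = g(e_i,e_i)$; the only nonformal input is the normalization $\sum_i \eps_i\eta(e_i)^2 = \eps$, obtained by expanding $\eta(\Reeb) = 1$ in that frame, so that the $(2n+1)$ diagonal contributions shed exactly one unit to give $2n$.

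The main work, and the step I expect to be the principal obstacle, is \eqref{eq_RmPhi}, since it needs a second covariant differentiation and careful tracking of the factors of $\eps$. The idea is to apply the Ricci identity to the endomorphism field $\phi$, giving $\bar\Rm(X,Y)\phi Z - \phi\,\bar\Rm(X,Y)Z = (\bar\nabla^2_{X,Y}\phi)Z - (\bar\nabla^2_{Y,X}\phi)Z$. To evaluate the right-hand side I regard $T(Y,Z) := (\bar\nabla_Y\phi)Z = \eps\eta(Z)Y - g(Y,Z)\Reeb$ from \eqref{eq_Lem_Sas1bis} as a $(1,2)$-tensor and differentiate once more; working at a point in a frame parallel there, only the derivatives of $\eta(Z)$ and of $\Reeb$ survive, supplied respectively by \eqref{eq_Lem_Sas2a} and by $\bar\nabla_X\Reeb = \eps\phi X$, which gives $(\bar\nabla_X T)(Y,Z) = \eps g(\phi X, Z)Y - \eps g(Y,Z)\phi X$. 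Antisymmetrizing in $X$ and $Y$ then produces precisely the four-term right-hand side of \eqref{eq_RmPhi}. Throughout, the delicate points are keeping the $\eps$ factors consistent and adhering to the fixed convention $R(X,Y)\sigma = D_X D_Y\sigma - D_Y D_X\sigma - D_{[X,Y]}\sigma$, which is what ultimately fixes the signs in \eqref{eq_Lem_Sas6}--\eqref{eq_RmPhi}.
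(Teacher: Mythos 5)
The paper itself states this lemma without proof (it only points to \cite{schaefer} for some identities in the Lorentzian case), so your proposal has to stand on its own. Your strategy is the standard and correct one, and most of it checks out: the algebraic identities \eqref{eq_Lem_Sas1}--\eqref{eq_Lem_Sas1bisbis} (including the use of $\phi\Reeb=0$, $\eta\circ\phi=0$ and $g(\phi X,\Reeb)=0$), the first-order identities \eqref{eq_Lem_Sas2a} and \eqref{eq_Lem_Sas3} from $\bar\nabla\Reeb=\eps\phi$ and $\bar\nabla g=0$, the trace argument for \eqref{Rici_in_Reeb} with the normalization $\sum_a\eps_a\eta(e_a)^2=\eps$, and in particular the Ricci-identity route to \eqref{eq_RmPhi}: your expression $(\bar\nabla_XT)(Y,Z)=\eps g(\phi X,Z)Y-\eps g(Y,Z)\phi X$ is correct and its antisymmetrization in $X,Y$ reproduces the four-term right-hand side of \eqref{eq_RmPhi} with the correct powers of $\eps$.

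The genuine defect is the sign of \eqref{eq_Lem_Sas6}, which you explicitly decline to compute (``whose overall sign is pinned down by the curvature convention''). Carrying out your own recipe gives $\bar\Rm(X,Y)\Reeb=\eps\bigl[(\bar\nabla_X\phi)Y-(\bar\nabla_Y\phi)X\bigr]=\eps\bigl(\eps\eta(Y)X-\eps\eta(X)Y\bigr)=\eta(Y)X-\eta(X)Y$, i.e.\ the \emph{negative} of the printed identity \eqref{eq_Lem_Sas6}. This makes your write-up internally inconsistent: you claim to obtain the printed \eqref{eq_Lem_Sas6} and then to obtain \eqref{eq_Lem_Sas7} by setting $Y=\Reeb$ and lowering an index, but the printed \eqref{eq_Lem_Sas6} yields $\bar\Rm(X,\Reeb,\Reeb,Y)=-\bigl(g(X,Y)-\eps\eta(X)\eta(Y)\bigr)$, not \eqref{eq_Lem_Sas7}; only the sign your computation actually produces is compatible with \eqref{eq_Lem_Sas7}, with $\bar\Ric(\Reeb,\Reeb)=2n$, and with the later use of \eqref{eq_Lem_Sas6} in the paper where $\bar\Rm(\phi e_i,\Reeb,\Reeb,\phi e_i)=g(\phi e_i,\phi e_i)$ is asserted. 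In other words, the printed sign in \eqref{eq_Lem_Sas6} is almost certainly a typo, and your deferral of the sign bookkeeping causes you to ``derive'' two mutually contradictory statements. You need to do the computation explicitly, commit to $\bar\Rm(X,Y)\Reeb=\eta(Y)X-\eta(X)Y$, and deduce \eqref{eq_Lem_Sas7} and \eqref{Rici_in_Reeb} from that corrected version.
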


\subsection{Legendrian submanifolds}

A submanifold $\I\,:\, L \rightarrow M$ of a contact manifold $(M^{2n+1}, \eta)$ is called \emph{
  horizontal} if it satisfies $\I^* \eta=0$. In particular, it follows  $\I^*
d\eta= \I^*\o=0.$ A  \emph{ Legendrian submanifold} is a maximally isotropic
submanifold $L^n$, i.e. a horizontal submanifold with $\dim
L=n$.  
	
Let us consider a smooth Riemannian immersion $ \I \,:\, L \rightarrow (M,g)$
into a Lorentzian manifold $(M,g),$  i.e. $ \I^*g$ defines a positive
definite metric on $L$. Then the \emph{second fundamental form} $h\in
\Gamma(T^*L \otimes T^*L\otimes NL),$ where $NL$ denotes the \emph{normal bundle} of $ \I \,:\, L
\rightarrow (M,g),$ is given by 
\[
h(X,Y)= \nabla_X(d\I)Y=(\I^*\bar \nabla)_X(d\I Y)- d\I(\nabla_XY),
\]
where $\bar \nabla$ and  $\nabla,$ resp. are the Levi-Civita connections of $g$ and $f^*g,$
resp.\footnote{To keep notation short, we later write $\bar \nabla $ for $f^*\bar \nabla$ and $g $ for $f^*g$.}
Further, for a section $\nu$ of $NL$ we define the normal connection $\nabla^\perp$ as the normal part and the shape operator $A_\nu$  as the tangential part of $\I^*\bar \nabla_X \nu,$ i.e. via 
\[
\I^*\bar \nabla_X \nu = \nabla^\perp_X \nu - A_\nu X \in NL \oplus \I_*(TL),
\]
where $A \in \Gamma(N^*L \otimes T^*L\otimes TL).$
The \emph{mean curvature} is defined as 
\[
H:= \tr_g^L h.
\]

Later we need Gauss' formula for pseudo-Riemannian submanifolds (see e.g. \cite[p.~100]{Oneill}).\footnote{Beware that O'Neill uses the opposite convention than ours for Riemannian curvature.}
\begin{lemma} \label{lemma:gauss}
The Riemann curvature tensor $\Rm$ of a pseudo-Riemannian submanifold $L$ in the pseudo-Riemannian manifold $(M,g)$ is related to the ambient curvature tensor $\bar \Rm$ and to the second fundamental form $h$ of the immersion by
\[
 \bar \Rm (A,B,C,D) = \Rm(A,B,C,D) - g( h(B,C), h(A,D)) + g( h(A,C), h(B,D))
\]
for vectors $A,B,C,D$ tangent to $L$.
\end{lemma}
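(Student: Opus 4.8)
The plan is to derive the identity directly from the Gauss and Weingarten decompositions, working throughout with the paper's own curvature convention rather than importing O'Neill's formula verbatim (the cited sign is opposite to ours). I would extend $A,B,C,D$ to local vector fields tangent to $L$. The two structural equations to use are the Gauss formula $\bar\nabla_X Y = \nabla_X Y + h(X,Y)$, valid whenever $X,Y$ are tangent, and the Weingarten formula $\bar\nabla_X \nu = \nabla^\perp_X \nu - A_\nu X$ for a normal field $\nu$; both are exactly the definitions of $h$, $\nabla^\perp$ and $A_\nu$ recalled just above.

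First I would expand the ambient curvature $\bar\Rm(A,B)C = \bar\nabla_A\bar\nabla_B C - \bar\nabla_B\bar\nabla_A C - \bar\nabla_{[A,B]}C$. Applying Gauss to $\bar\nabla_B C$, then once more to the tangential piece $\nabla_B C$ while applying Weingarten to the normal piece $h(B,C)$, gives
\[
 \bar\nabla_A\bar\nabla_B C = \nabla_A\nabla_B C + h(A,\nabla_B C) + \nabla^\perp_A h(B,C) - A_{h(B,C)}A.
\]
Collecting the tangential components of the three terms, the purely intrinsic pieces reassemble into $\Rm(A,B)C$, and what survives from the second fundamental form is $-A_{h(B,C)}A + A_{h(A,C)}B$. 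The normal components, which encode the Codazzi equation, I would discard, since only the pairing with the tangent vector $D$ is needed here.

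Second I would pair with $D$ and convert the shape operators back into second fundamental forms via the self-adjointness relation $g(A_\nu X, Y) = g(h(X,Y), \nu)$. This relation itself follows by differentiating $g(Y,\nu)=0$ with the metric-compatible connection $\bar\nabla$ and using that tangential--normal pairings vanish; note that with our Weingarten sign it reads without an extra minus. After the substitution the two residual terms become $-g(h(A,D),h(B,C))$ and $+g(h(B,D),h(A,C))$, which is precisely the claimed form once one invokes the symmetry of $g$ and of $h$.

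The computation is otherwise routine; the only place demanding care is the sign bookkeeping. Because the stated convention $R(X,Y)\sigma = D_X D_Y\sigma - D_Y D_X\sigma - D_{[X,Y]}\sigma$ is opposite to O'Neill's, I cannot simply quote his Gauss equation and must track each sign through the Weingarten term, where the minus in $-A_\nu X$ is exactly what produces $-g(h(B,C),h(A,D))$ with the correct orientation. Verifying that the final index pattern matches the asserted one is then immediate from the symmetries just mentioned.
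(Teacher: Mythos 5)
Your derivation is correct: with the paper's conventions $\bar\nabla_XY=\nabla_XY+h(X,Y)$, $\bar\nabla_X\nu=\nabla^\perp_X\nu-A_\nu X$ and $R(X,Y)=[D_X,D_Y]-D_{[X,Y]}$, the tangential part of $\bar\Rm(A,B)C$ is exactly $\Rm(A,B)C-A_{h(B,C)}A+A_{h(A,C)}B$, and pairing with $D$ via $g(A_\nu X,Y)=g(h(X,Y),\nu)$ gives the stated identity. The paper offers no proof of this lemma beyond a citation to O'Neill with a footnote about the opposite curvature convention, so your self-contained computation is just the standard argument carried out explicitly in the paper's signs, and it handles the sign bookkeeping (in particular the minus in the Weingarten formula and the implicit convention $\bar\Rm(A,B,C,D)=g(\bar\Rm(A,B)C,D)$) correctly.
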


Following \cite{Ono}, we give the following definition.

\begin{defi}
Let  $(M,g,\Reeb,\eta,\phi,\eps)$ be a pseudo-Sasakian manifold and $\I \colon L \rightarrow M$ a Legendrian immersion. 
A smooth family of immersions  $\{ f_t \}_{t\in (-\delta,\delta)}$  is called \emph{Legendrian deformation} of $L,$ 
if $f_t$ is Legendrian for all $t \in (-\delta,\delta)$ and it is $f_0=f.$
\end{defi}

By the curvature properties of pseudo-Sasakian metrics, we have the following.

\begin{lemma} \label{lemma:Rm}
For a Legendrian submanifold $L$  in a pseudo-Sasakian manifold $(M,g,\Reeb,\eta,\phi,\eps)$ and in a normal orthonormal frame $e_1, \ldots, e_n$ with $\eps_i= g(e_i,e_i)$, along the Legendrian submanifold $L$ one has
\begin{enumerate}
\item $\sum_{i=1}^n \eps_i \bar\Rm(\phi e_i, \xi,\xi, \phi e_i) = n,$
\item $\eps_i \bar\Rm(\phi e_i, \xi, \phi e_i, V_H) = 0, \mbox{ for } V_H \in \mathcal D.$
\end{enumerate}
\end{lemma}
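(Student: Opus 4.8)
The plan is to verify the two identities separately, in each case substituting the curvature relations \eqref{eq_Lem_Sas6} and \eqref{eq_Lem_Sas7} from the preceding lemma and reducing everything to the purely algebraic relations of the almost contact metric structure. Before starting I would record the facts that are used repeatedly: the standard identities $\phi\xi = 0$ and $\eta\circ\phi = 0$ for almost contact structures (see \cite{Blair}), so that in particular $\eta(\phi e_i) = \eps\, g(\xi, \phi e_i) = -\eps\, g(\phi\xi, e_i) = 0$ by \eqref{eq_Lem_Sas1bisbis}; and the Legendrian condition $\I^*\eta = 0$, which gives $\eta(e_i) = 0$ for the tangent frame $e_1, \dots, e_n$.

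For part (i), I would apply \eqref{eq_Lem_Sas7} with $X = Y = \phi e_i$, obtaining
\[
\bar\Rm(\phi e_i, \xi, \xi, \phi e_i) = g(\phi e_i, \phi e_i) - \eps\, \eta(\phi e_i)^2 = g(\phi e_i, \phi e_i),
\]
where the last equality uses $\eta(\phi e_i) = 0$. The compatibility relation $g(\phi X, \phi Y) = g(X,Y) - \eps\,\eta(X)\eta(Y)$ combined with $\eta(e_i) = 0$ then yields $g(\phi e_i, \phi e_i) = g(e_i, e_i) = \eps_i$. Multiplying by $\eps_i$ gives $\eps_i^2 = 1$ for each $i$, and summing over $i = 1, \dots, n$ produces the claimed value $n$.

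For part (ii), the expression $\bar\Rm(\phi e_i, \xi, \phi e_i, V_H)$ does not have $\xi$ in a slot where \eqref{eq_Lem_Sas6} applies directly, so I would first move it there using the symmetries of the Riemann tensor. The pair symmetry gives $\bar\Rm(\phi e_i, \xi, \phi e_i, V_H) = \bar\Rm(\phi e_i, V_H, \phi e_i, \xi)$, and antisymmetry in the last two entries turns this into $-\, g(\bar\Rm(\phi e_i, V_H)\xi, \phi e_i)$. Now \eqref{eq_Lem_Sas6} gives $\bar\Rm(\phi e_i, V_H)\xi = \eta(\phi e_i)\, V_H - \eta(V_H)\, \phi e_i$, which vanishes since $\eta(\phi e_i) = 0$ and $\eta(V_H) = 0$ because $V_H \in \mathcal D$. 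Hence each summand is already zero, and in particular so is the sum.

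I do not anticipate a genuine obstacle, as both identities are immediate consequences of the structure equations; the only point requiring care is keeping track of the curvature symmetries under the paper's sign convention and applying $\phi\xi = 0$ and the Legendrian condition consistently to annihilate the $\eta$-terms. It is worth noting that the second identity in fact holds termwise, which is slightly stronger than the summed form as stated.
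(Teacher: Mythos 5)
Your proof is correct and follows essentially the same route as the paper's (very terse) proof: part (i) reduces to $\bar\Rm(\phi e_i,\xi,\xi,\phi e_i)=g(\phi e_i,\phi e_i)=\eps_i$ via \eqref{eq_Lem_Sas7} and the compatibility relation, and part (ii) reduces via the curvature symmetries to $\bar\Rm(\phi e_i,V_H)\xi=0$ from \eqref{eq_Lem_Sas6} together with $\eta(\phi e_i)=\eta(V_H)=0$. The extra detail you supply (and the observation that (ii) holds termwise) is consistent with the paper.
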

\begin{proof}
The first is a consequence of \eqref{eq_Lem_Sas6}, i.e.
$\bar\Rm(\phi e_i, \xi,\xi, \phi e_i) = g(\phi e_i,\phi e_i).$
The second follows from \eqref{eq_Lem_Sas6} and $\eta(\phi e_i)=0.$
\end{proof}

We state a property of the second fundamental form of a Legendrian submanifold in a pseudo-Sasakian manifold, whose proof is basically the same as for its Riemannian counterpart; see \cite[Prop.~3.4]{Ono}.

\begin{lemma} \label{lemma:2ff}
 The second fundamental form $h$ of a Legendrian submanifold $L$ in a pseudo-Sasakian manifold $(M, g, \eta, \xi, \phi, \eps)$ satisfies the following properties.
 \begin{enumerate}
  \item $g(h(X,Y), \xi) = 0$ for all $X, Y$ tangent to $L$;
  \item $g(h(X,Y), \phi Z) = g(h(X,Z), \phi Y)$ for all $X,Y,Z$ tangent to $L$.
 \end{enumerate}
\end{lemma}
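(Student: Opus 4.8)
The plan is to compute the two normal components directly from the definition $h(X,Y) = \bar\nabla_X(dfY) - df(\nabla_X Y)$, exploiting two structural facts about a Legendrian $L$: the Reeb field $\xi$ is $g$-orthogonal to $TL$ (since $g(\xi, dfX) = \eps\,\eta(dfX) = 0$), and $\phi$ maps $TL$ into the normal bundle $NL$ (since $g(\phi dfX, dfY) = \o(dfX,dfY) = 0$ by the Legendrian condition $f^*\o = 0$, using \eqref{eq_Lem_Sas2a}). Because of these, the purely tangential term $df(\nabla_X Y)$ will drop out of both inner products, reducing each computation to the ambient derivative $\bar\nabla_X(dfY)$ paired against a fixed normal vector, and each identity will follow by differentiating a Legendrian inner product that is identically zero.

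For (1), I would differentiate $g(dfY, \xi) = 0$ in the direction $X$. Since $g(df(\nabla_X Y), \xi) = 0$, this gives $g(h(X,Y), \xi) = g(\bar\nabla_X(dfY), \xi) = -g(dfY, \bar\nabla_X\xi)$. Substituting $\bar\nabla\xi = \eps\phi$ from Proposition~\ref{prop:nablaxi} turns the right-hand side into $-\eps\,g(dfY, \phi dfX) = -\eps\,\o(dfX, dfY)$, which vanishes because $L$ is Legendrian.

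For (2) the strategy is analogous but one differentiates $g(dfY, \phi dfZ) = \o(dfY, dfZ) = 0$, obtaining $g(h(X,Y), \phi dfZ) = -g(dfY, \bar\nabla_X(\phi dfZ))$. The key step is to expand $\bar\nabla_X(\phi dfZ) = (\bar\nabla_X\phi)(dfZ) + \phi(\bar\nabla_X dfZ)$ via the Sasakian identity \eqref{eq_Lem_Sas1bis}. The derivative-of-$\phi$ term contributes only a multiple of $\xi$ (the $\eps\,\eta(dfZ)dfX$ summand drops since $\eta$ annihilates $TL$), which pairs to zero against $dfY$; and in $\phi(\bar\nabla_X dfZ) = \phi\,h(X,Z) + \phi\,df(\nabla_X Z)$ the second piece is tangential, so after the skew-symmetry $g(\,\cdot, \phi\,\cdot) = -g(\phi\,\cdot, \cdot)$ it pairs to zero against $dfY$ as well. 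What survives is $g(h(X,Y), \phi dfZ) = -g(dfY, \phi\,h(X,Z)) = g(\phi dfY, h(X,Z)) = g(h(X,Z), \phi dfY)$, the claimed symmetry.

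The calculations are elementary; the only thing demanding care is the bookkeeping of which vectors are tangent and which are normal, so that each pairing is correctly declared zero. The genuinely structural input — and the reason the two identities hold in exactly this form — is that $\phi$ interchanges the tangent and normal bundles along a Legendrian submanifold. Once that is in hand, both statements reduce to a single differentiation of a vanishing Legendrian inner product, so I do not anticipate a substantive obstacle beyond careful sign tracking.
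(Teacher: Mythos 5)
Your proof is correct: both identities follow, as you show, by differentiating the vanishing Legendrian pairings $g(dfY,\xi)=0$ and $g(dfY,\phi\, dfZ)=0$ and using $\bar\nabla\xi=\eps\phi$ and \eqref{eq_Lem_Sas1bis}, with the tangential terms dropping out because $\xi$ and $\phi(TL)$ are normal to $L$. The paper itself omits the proof and defers to Ono's Riemannian argument, which is essentially the same computation you carry out, so there is nothing to add.
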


\section{L-minimal Legendrian submanifolds of pseudo-Sasakian manifolds} \label{sec:legendrian}
\subsection{The second variation along Legendrian deformations}

\begin{defi}
Let $ \I \colon L \rightarrow M$ be a Legendrian immersion into a pseudo-Sasakian manifold $(M,g,\Reeb,\eta,\phi,\eps).$ Then $\I$ is called \emph{Legendrian minimal}  or \emph{L-minimal} 
if
\[
\frac{d}{dt}\biggr |_{t=0} \vol(L_t)=0
\]
for all Legendrian deformations of $L_t$.
\end{defi}

Taking the normal field $V = \biggl(\frac{\de}{\de t} |_{t=0} f_t(\cdot)\biggr)^\perp$, then $f_t$ is Legendrian if, and only if, $\lieder_V \eta = 0$.

From the known expression of the first variation (see e.g. \cite{anciaux_book}), namely 
\[
\frac{d}{dt}\biggr |_{t=0} \vol(L_t) = -\int_L g(X,H) dv_0,
\]
we see that L-minimality is equivalent to requiring $H$ to be $L^2$-orthogonal to all Legendrian vector fields.

For a normal field $V$ we write $V = f \xi + V_H$. It then is $\iota_{V_H} d\eta = 2(\phi V_H)^\flat$, implying, since $\lieder_V \eta = 0$, that $V_H = \frac 1 2 \phi \nabla f$.
For positive signature the following is due to \cite{Iriyeh}.
\begin{prop} \label{Lmin}
The immersion $ \I \colon L \rightarrow M$ of a manifold $L$  is L-minimal (with respect to variations fixing the boundary)
if and only if it is
\begin{equation}
\delta \alpha_H=0 \text{ or equivalently }\div(\phi H)=0 \label{eq_lmin} 
\end{equation} 
where $\alpha_H = d\eta(H, \cdot)$.
\end{prop}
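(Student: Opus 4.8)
The plan is to substitute the explicit form $V = f\xi + \tfrac{1}{2}\phi\nabla f$ of a Legendrian variation field (established just above the statement) into the first variation formula and integrate by parts. First I would compute the integrand $g(V,H)$. By Lemma~\ref{lemma:2ff}(1) the mean curvature satisfies $g(H,\xi)=0$, so the $\xi$-component of $V$ contributes nothing and $g(V,H) = \tfrac{1}{2}g(\phi\nabla f, H)$. Using the skew-symmetry \eqref{eq_Lem_Sas1bisbis} of $\phi$ with respect to $g$, this becomes $g(V,H) = -\tfrac{1}{2}g(\nabla f, \phi H)$.

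The key structural observation is that $\phi H$ is \emph{tangent} to $L$. Indeed, the Legendrian condition $f^*d\eta = 0$ together with $d\eta = 2g(\phi\,\cdot\,,\cdot)$ gives $g(\phi X, Y) = 0$ for all $X,Y$ tangent to $L$, so $\phi(TL)\perp TL$; combined with $\xi\perp TL$ and $\phi(TL)\perp\xi$ and a dimension count, the normal bundle decomposes as $NL = \phi(TL)\oplus\R\xi$. Since $H\in NL$ with $g(H,\xi)=0$, we get $H\in\phi(TL)$, and then \eqref{eq_Lem_Sas1} gives $\phi H\in\phi^2(TL) = TL$. Hence $\div(\phi H)$ is a well-defined function on $L$ and integration by parts applies to the tangential vector field $\phi H$.

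Next I would apply the divergence theorem. Since the variation fixes the boundary, $f$ vanishes on $\partial L$, the boundary term drops, and $\int_L g(\nabla f, \phi H)\,dv_0 = -\int_L f\,\div(\phi H)\,dv_0$. Substituting into the first variation formula yields $\frac{d}{dt}\big|_{t=0}\vol(L_t) = -\int_L g(V,H)\,dv_0 = -\tfrac{1}{2}\int_L f\,\div(\phi H)\,dv_0$. As $f$ ranges over all smooth functions on $L$ vanishing on $\partial L$ — and every such $f$ produces a boundary-fixing Legendrian deformation via the parametrization $V = f\xi + \tfrac12\phi\nabla f$ — the fundamental lemma of the calculus of variations shows that L-minimality is equivalent to $\div(\phi H) = 0$.

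Finally, for the equivalence with $\delta\alpha_H = 0$: restricting $\alpha_H = d\eta(H,\cdot)$ to $TL$ and again using $d\eta = 2g(\phi\,\cdot\,,\cdot)$ gives $\alpha_H|_{TL} = 2(\phi H)^\flat$, the metric dual of the tangential field $\phi H$. Since the codifferential acts on the $1$-form dual to a tangential vector field $W$ by $\delta(W^\flat) = -\div W$, we obtain $\delta\alpha_H = -2\,\div(\phi H)$, so the two conditions coincide. I expect the main subtlety to be the tangency of $\phi H$, which is precisely what makes $\div(\phi H)$ and $\delta\alpha_H$ meaningful quantities on $L$; the surrounding computation is then a routine integration by parts.
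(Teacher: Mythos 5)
Your argument is correct and follows essentially the same route as the paper: both substitute the Legendrian variation field into the first variation formula and integrate by parts against an arbitrary function vanishing on the boundary, the paper phrasing this dually by writing $\alpha_X = du$ and pairing with $\delta\alpha_H$, while you work with the tangential field $\phi H$ and $\div(\phi H)$. Your explicit verification that $\phi H$ is tangent to $L$ (via $NL=\phi(TL)\oplus\R\xi$ and $g(H,\xi)=0$) and of the identity $\delta\alpha_H=-2\div(\phi H)$ supplies details the paper leaves implicit, but it is the same proof.
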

\begin{proof}
In fact, the well-known formula for the first variation along the normal direction $X$ yields for variations with $d\alpha_X =d u$ for some function  $u$ on $L$
\begin{align*}
\frac{d}{dt}\biggr |_{t=0} \vol(L_t) &= - \int_L g(X, H) dv_0 =- \int_L g(\alpha_X, \alpha_H) dv_0 \\& =- \int_L g(d u, \alpha_H) dv_0 = - \int_L u \delta\alpha_H dv_0, 
\end{align*}
where we used that $X$ vanishes on the boundary. Since this vanishes for arbitrary  Legendrian variations we conclude \eqref{eq_lmin}.
\end{proof}
\begin{prop} \label{prop_second_Lmin}

 Let $(M, \eta, \xi, g, \phi, \eps)$ be a pseudo-Sasakian manifold and let $L$ be an L-minimal Legendrian submanifold, possibly with boundary. Then the second variation of the volume of $L$ under the normal direction $V = f \xi + \frac{1}{2}\phi \nabla f$, vanishing on $\partial L$, is
 \begin{align} \label{formula2var}
  \frac{d^2}{dt^2}\biggr |_{t=0} \vol(L_t) 	= \int_L \biggl \{ 	& \tr_g[(\nabla^\perp_{\cdot} V,\nabla^\perp_{\cdot} V) + \bar \Rm ({\cdot}, V, {\cdot}, V)]- g(A_V, A_V) \\
									&- \frac{1}{4} g(h(\nabla f, \nabla f), H) + g(H,V)^2 \biggr \} dv_0  \nonumber 
 \end{align}
where $L_t, \; t\in (-\delta,\delta), $ is a family of submanifolds with variational vector field $V$ and $dv_0$ is the volume form of the induced metric at $t=0$. 
\end{prop}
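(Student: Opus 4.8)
The plan is to obtain \eqref{formula2var} by differentiating the first variation along the entire Legendrian path, rather than by quoting a ready-made Jacobi-operator identity, since here $L$ is only L-minimal and not minimal. Let $\{f_t\}$ be a Legendrian deformation with normal variational field $V_t$ along $L_t$ and $V_0 = V = f\xi + \frac12 \phi\nabla f$; concretely I would realize $f_t$ as the flow of the contact vector field whose normal component along $L$ is $V$, so that the acceleration is determined. Starting from the first variation $\frac{d}{dt}\vol(L_t) = -\int_{L_t} g(V_t, H_t)\, dv_t$ (valid for every $t$, not only at a critical point), I differentiate once more and evaluate at $t=0$. Using $\partial_t(dv_t)|_{0} = -g(V,H)\,dv_0$ for a normal field, this splits into three pieces:
\[
\frac{d^2}{dt^2}\bigg|_{0}\vol(L_t) = \int_L g(H,V)^2\, dv_0 - \int_L g(V, \nabla^\perp_{\partial_t} H)\, dv_0 - \int_L g(H, \nabla_{\partial_t} V)\, dv_0 .
\]
The first piece is already the term $g(H,V)^2$ appearing in the statement.

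For the second piece I would insert the standard linearization of the mean curvature of the family $L_t$. Expressing $\nabla^\perp_{\partial_t} H$ through the rough Laplacian of $\nabla^\perp$, the ambient curvature and the shape operator by means of the Gauss equation (Lemma~\ref{lemma:gauss}) and the Codazzi identity, and then integrating by parts, the boundary terms drop because $V$ vanishes on $\partial L$, and one is left with
\[
-\int_L g(V, \nabla^\perp_{\partial_t} H)\, dv_0 = \int_L \left[\, |\nabla^\perp V|^2 + \tr_g \bar\Rm(\cdot, V, \cdot, V) - g(A_V,A_V)\,\right] dv_0 ,
\]
which supplies the three Jacobi-type terms. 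The only delicate point here is matching the curvature sign to the convention fixed for $R$ in the excerpt.

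The genuinely non-standard piece is the acceleration term $-\int_L g(H, \nabla_{\partial_t} V)\, dv_0$, and this is where I expect the main difficulty to lie. For the contact flow I would compute $\nabla_{\partial_t} V$ by expanding $\nabla_V V$ (together with the tangential corrections coming from the flow) via the structure equations $\nabla\xi = \eps\phi$ of Proposition~\ref{prop:nablaxi} and \eqref{eq_Lem_Sas1bis} for $\nabla\phi$. The key observation is that, since $H$ is normal and by Lemma~\ref{lemma:2ff} satisfies $g(H,\xi)=0$ so that $H\in\phi(TL)$, the pairing $g(H,\cdot)$ annihilates every $\xi$-component and every tangential component produced in the expansion; moreover $\eta(\nabla f)=\eps g(\xi,\nabla f)=0$ because $\nabla f$ is tangent to $L$. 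After these cancellations the surviving contribution is the $\phi$-twisted normal derivative of $\nabla f$ along $\nabla f$, which by Lemma~\ref{lemma:2ff}(2) and the definition of $h$ equals, up to the overall constant, $g(h(\nabla f,\nabla f),H)$, together with a leftover term that is a divergence of a field built from $\phi H$. That divergence is removed precisely by the L-minimality condition $\div(\phi H)=0$ of Proposition~\ref{Lmin} (again using $V|_{\partial L}=0$), leaving exactly $-\frac14 g(h(\nabla f,\nabla f),H)$. Assembling the three pieces then gives \eqref{formula2var}. The careful bookkeeping of which terms in $\nabla_{\partial_t}V$ survive the pairing with $H$, and the correct extraction of the $\div(\phi H)$ term, is the step I anticipate to be the most error-prone.
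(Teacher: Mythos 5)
Your overall architecture coincides with the paper's: differentiate the first variation $-\int_{L_t} g(V_t,H_t)\,dv_t$, use $\partial_t(dv_t)|_0=-g(V,H)\,dv_0$ to produce the $g(H,V)^2$ term, recognize the Jacobi-type block $\tr_g[(\nabla^\perp V,\nabla^\perp V)+\bar\Rm(\cdot,V,\cdot,V)]-g(A_V,A_V)$ (the paper gets it by a bare-hands frame computation of $\tfrac{d}{dt}g(H,X)$ rather than by quoting the linearization of $H$, plus a tangential divergence $\div(\bar\nabla_VV)^T$ killed by Stokes and $V|_{\partial L}=0$), and isolate the acceleration term $-g(\bar\nabla_VV,H)$ as the one genuinely contact-geometric ingredient. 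Up to that point your plan is sound.

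The gap is in the acceleration term, which you yourself flag as the delicate step but then resolve by a mechanism that is not the correct one. You predict that $-g(H,\nabla_{\partial_t}V)$ produces $-\tfrac14 g(h(\nabla f,\nabla f),H)$ \emph{plus} a leftover divergence of a field built from $\phi H$, to be discarded using the L-minimality condition $\div(\phi H)=0$ of Proposition~\ref{Lmin}. In fact no such residue arises and L-minimality is not invoked at this stage: the identity
\[
g(\bar\nabla_VV,H)=\tfrac14\,g(h(\nabla f,\nabla f),H)
\]
is \emph{pointwise}. It follows by writing $g(\bar\nabla_VV,H)=g(\phi\bar\nabla_VV,\phi H)$ (legitimate since $\eta(H)=0$ by Lemma~\ref{lemma:2ff}(1)), commuting $\phi$ past $\bar\nabla$ via \eqref{eq_Lem_Sas1bis} and $\nabla\xi=\eps\phi$ from Proposition~\ref{prop:nablaxi}, and converting $g(h(\nabla f,\nabla f),H)$ into $g(\phi h(\nabla f,\phi H),\nabla f)$ by the symmetry of Lemma~\ref{lemma:2ff}(2); the difference of the two sides collapses to a multiple of $\phi H\cdot(V\cdot f)$, which vanishes because $V$ is normal to $L$ (equivalently, because $f=\eta(V)$ and $\lieder_V\eta=0$). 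So the step you defer as ``error-prone'' is precisely the content of the proposition, and the resolution you conjecture for it — an integration-by-parts argument hinging on $\div(\phi H)=0$ — does not match what actually closes the computation. Without carrying out this pointwise cancellation, the coefficient $-\tfrac14$ in front of $g(h(\nabla f,\nabla f),H)$ in \eqref{formula2var} is not established, so the proof as proposed is incomplete at its only nontrivial point.
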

\begin{proof}
For this proof in positive signature we refer to \cite{schoen_wolfson}. We fix  a local ($t$-dependent) frame $e_i, i =1, \ldots ,n,$ for $L$. 
One starts with the well-known formula for the first variation along the direction $X$
\[
\frac{d}{dt} \vol(L_t) = - \int_L g(X, H) dv_t
\]
where $H$ is the mean curvature vector of the family $L_t$ with variational vector field $X$ and derives this expression at $t=0$
\begin{equation*}
  \frac{d^2}{dt^2}\biggr |_{t=0} \vol(L_t)	 =  -\int_L \frac{d}{dt} g(H,X)|_{t=0} dv_0 + \int_L g(X, H)^2 dv_0
\end{equation*}
where we have used the well-known fact that $\frac{d}{dt}(dv_t) = - g(X, H) dv_t$.

Here we  write $g$ for the induced metric on $L_t,$ too. Further we also write $\bar \nabla$ for the pull-back of the Levi-Civita connection along the immersion $(-\eps,\eps) \times L \ni  (t,p)\mapsto f_t(p) \in M$.
The first two terms are
\begin{equation*}
\frac{d}{dt} \biggr |_{t=0} g(H,X) = \frac{d g^{ij}}{dt}(0) g(h(e_i, e_j), X ) + g^{ij} g \biggl ( \frac{d}{dt}|_{t=0} h(e_i, e_j), X \biggr )
 \end{equation*}
where we recall 
 \begin{align*}
 \frac{d}{dt} g_{ij} & = X \cdot g(e_i, e_j) = g(\bar \nabla_X e_i,e_j) + g(e_i, \bar \nabla_X e_j) = g(\bar \nabla_{e_i} X, e_j) + g(e_i, \bar \nabla_{e_j} X) 
 \\&= -2 g(h(e_i, e_j), X) = -2g(A_X e_i, e_j),
 \end{align*}
 where we have used that $[e_k,X]=0$.

It is then $\frac{d}{dt} g^{ij}(0) = - g^{ia} g'_{ab} g^{bj}|_{t=0} = -\eps_i \eps_jg(h(e_i, e_j), X)$.

We have
\begin{align*}
 -\frac{d}{dt} \biggr |_{t=0} g(H,X)	 &= - 2\eps_i \eps_j ( g(h(e_i, e_j), X)g(\bar \nabla_{e_i} X, e_j) + \delta_{ij} \eps_i X \cdot g(\bar \nabla_{e_i} X, e_j) \\
					 &= -2\eps_i \eps_j g(h(e_i, e_j), X)^2 + \eps_i \biggl [ g(\bar \nabla_X \bar \nabla_{e_i} X, e_i) + g(\bar \nabla_{e_i} X, \bar \nabla_X e_i)  \biggr ]\\
					 &=  -2\eps_i \eps_j g(h(e_i, e_j), X)^2  + \eps_i \biggl [ \bar \Rm(X,e_i,X,e_i) + g( \bar \nabla_{e_i} \bar \nabla_X X, e_i) + |\bar \nabla_{e_i}X|^2 \biggr ].  
\end{align*}

On the other hand $A_X e_i = g(A_X e_i, e_j) \eps_j e_j$, so $\eps_i g(A_X e_i, A_X e_i) = g(h(e_i, e_j), X)^2 \eps_j$. So we have

\begin{align*}
 \frac{d^2}{dt^2}\biggr |_{t=0} \vol(L_t)= \int_L \biggl \{ & \eps_i |\nabla^\perp_{e_i} X|^2 + \eps_i \bar \Rm(X, e_i, X, e_i) - \eps_i |A_X e_i|^2 \\ 
                                                            &+ \div(\bar \nabla_X X)^T - g(\bar \nabla_X X, H) + g(X,H)^2 \biggr \} dv_0\
\end{align*}

The divergence term is $\int_L \div( \bar \nabla_X X)^T dv_0 = \int_{\partial L} g(\bar \nabla_X X, \nu) = 0$ since $X$ vanishes on $\partial L$.

Finally we compute
\begin{align*}
 g(\bar \nabla_X X, H) - \frac 1 4 g(h(\nabla f, \nabla f), H)	&= g(\phi \bar \nabla_X X, \phi H) - \frac 1 4 g(\phi h(\nabla f, \phi H),\nabla f) \\
								&= g(\bar \nabla_X \phi X, \phi H) - \frac 1 4 g(\phi h(\nabla f, \phi H),\nabla f) \\
								&= X \cdot g(\phi X,\phi H)- g(\phi X, \bar \nabla_X \phi H)- \frac 1 4 g(\phi h(\nabla f, \phi H),\nabla f) \\
								&= (\bar \nabla_X (\phi X)^\flat)(\phi H)- \frac 1 4 g(\phi \bar \nabla_{\phi H} \nabla f, \nabla f)\\
								&= (\bar \nabla_X (\phi X)^\flat)(\phi H)+ \frac 1 2 g(\bar \nabla_{\phi H}  X, \nabla f)\\
								&= (\bar \nabla_X (\phi X)^\flat)(\phi H)+ \frac 1 2 df (\bar \nabla_{\phi H}  X) \\
								&= \frac 1 2 (  X df(\phi H) - df (\bar \nabla_X \phi H) + df (\bar \nabla_{\phi H}  X))\\
								&= \phi H \cdot X \cdot f \\
								&= 0
\end{align*}
since $X$ is \emph{normal} to $L$. So we can conclude  \eqref{formula2var}.
\end{proof}

Let us now compute the first term of \eqref{formula2var}. We have

\begin{align*}
 \nabla_{e_i}^\perp V &= \biggl ( \bar \nabla_{e_i} (f \xi + \frac 1 2 \phi \nabla f) \biggr )^\perp\\
		      &= f (\bar  \nabla_{e_i} \xi)^\perp + e_i \cdot f \xi + \frac 1 2( \bar \nabla_{e_i}  \phi \nabla f)^\perp\\
		      &= \eps f \phi e_i + e_i \cdot f \xi + \frac 1 2 ( \phi \nabla_{e_i} \nabla f - g(e_i, \nabla f) \xi) \\
		      &= \eps f \phi e_i + \frac 1 2 e_i \cdot f \xi + \frac 1 2 \phi \nabla_{e_i} \nabla f
\end{align*}
so we get summing over $i$
\begin{align}
 \eps_i g(\nabla_{e_i}^\perp V,\nabla_{e_i}^\perp V)	&=   f^2 \eps_i |\phi e_i|^2 + \frac 1 4 (e_i f)^2 \eps_i \eps + \frac 1 4 |\nabla^2 f|^2 + \eps f g(e_i, \nabla_{e_i} \nabla f) \eps_i\\ \nonumber
							&=  n f^2 + \frac 1 4 \eps |\nabla f|^2 + \frac 1 4 |\nabla^2 f|^2 - \eps f \Delta f \label{1stterm}
\end{align}
where $|\nabla^2 f|^2 = \eps_i |\nabla_{e_i} \nabla f|^2$ is the norm of the Hessian of $f$.

Applying \eqref{eq_RmPhi} we obtain the following, that says we have the symmetries analogous to the ones of the K\"ahler curvature tensor.

\begin{lemma}
 We have $\sum_i \eps_i \bar \Rm(\phi e_i, V_H, \phi e_i, V_H) = \sum_i \eps_i \bar \Rm(e_i, \phi V_H, e_i, \phi V_H)$.
\end{lemma}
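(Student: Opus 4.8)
The plan is to prove the identity
\[
\sum_i \eps_i \bar \Rm(\phi e_i, V_H, \phi e_i, V_H) = \sum_i \eps_i \bar \Rm(e_i, \phi V_H, e_i, \phi V_H)
\]
by applying the curvature commutation relation \eqref{eq_RmPhi} to move the $\phi$ off the frame vector $e_i$ and onto the field $V_H$, and then showing that all the correction terms produced by \eqref{eq_RmPhi} either cancel in pairs or vanish upon summing over the orthonormal frame. The first step is to rewrite the summand on the left. Using the symmetry $\bar \Rm(A,B,C,D) = \bar \Rm(C,D,A,B)$, I would view $\bar \Rm(\phi e_i, V_H, \phi e_i, V_H)$ as $g(\bar \Rm(\phi e_i, V_H)\phi e_i, V_H)$ and apply \eqref{eq_RmPhi} to the last slot, i.e. to $\bar \Rm(\phi e_i, V_H)(\phi e_i)$, writing it as $\phi \bar \Rm(\phi e_i, V_H)e_i$ plus the four error terms from \eqref{eq_RmPhi} with $X = \phi e_i$, $Y = V_H$, $Z = e_i$.

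**Handling the correction terms.** After pairing $g(\phi \bar \Rm(\phi e_i, V_H)e_i, V_H) = -g(\bar \Rm(\phi e_i, V_H)e_i, \phi V_H)$ (using the skew-symmetry $g(\phi A, B) = -g(A, \phi B)$ from \eqref{eq_Lem_Sas1bisbis}), I would recognize this leading piece as $\bar \Rm(e_i, \phi V_H, \phi e_i, V_H)$ after reindexing the symmetries, and then apply \eqref{eq_RmPhi} a second time — now to the $\phi e_i$ entry — to convert it into the right-hand side summand $\bar \Rm(e_i, \phi V_H, e_i, \phi V_H)$, again generating four correction terms. The bulk of the work is then bookkeeping: collecting all correction terms of the schematic form $\eps \eps_i\, g(\phi e_i, V_H)\,(\cdots)$, $\eps \eps_i\, g(\phi\cdot,\cdot)(\cdots)$, etc., and summing over $i$. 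Here the key simplifications are the completeness relation $\sum_i \eps_i e_i \otimes e_i = \id$ on $TL$ (so that contractions like $\sum_i \eps_i g(\phi e_i, V_H) e_i$ collapse), the relation $\phi^2 = -\id + \eta\otimes\xi$ from \eqref{eq_Lem_Sas1}, and the fact that $V_H \in \mathcal D$ satisfies $\eta(V_H) = 0$ together with the orthonormality of the Legendrian frame.

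**The main obstacle.** I expect the principal difficulty to be purely combinatorial rather than conceptual: the two applications of \eqref{eq_RmPhi} produce eight correction terms, and I must verify that they organize into cancelling pairs (the terms symmetric in the two applications differing by a sign) plus terms that vanish under the frame sum because they contract $\phi e_i$ against $e_i$, giving $\sum_i \eps_i g(\phi e_i, e_i) = \tr_g \phi = 0$ by \eqref{eq_Lem_Sas1bisbis}. The delicate point is tracking signs consistently through the $g(\phi\cdot,\cdot)$ antisymmetry and the curvature symmetries; a sign error in any single correction term would break the cancellation. To control this I would carry out the computation slot by slot, keeping $V_H$ in its horizontal subbundle $\mathcal D$ throughout so that every $\eta(V_H)$ factor drops, and invoke Lemma \ref{lemma:Rm}(2) where a surviving $\bar \Rm(\phi e_i, \xi, \phi e_i, V_H)$-type term might appear. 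The conceptual content — that \eqref{eq_RmPhi} endows $\bar\Rm$ with Kähler-type symmetry in the horizontal directions — is exactly what makes the two sides equal, so once the error terms are seen to cancel the result is immediate.
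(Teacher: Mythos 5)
Your strategy is exactly the paper's proof: apply \eqref{eq_RmPhi} once to trade $\bar\Rm(\phi e_i, V_H)\phi e_i$ for $\phi\bar\Rm(\phi e_i, V_H)e_i$, use the pair symmetry and $g(\phi\cdot,\cdot)$-antisymmetry, apply \eqref{eq_RmPhi} a second time, and observe that the two batches of correction terms (which reduce to $\eps(1-n)|V_H|^2$ each after using $g(V_H,e_i)=0$, $\eta(e_i)=\eta(V_H)=0$ and $g(\phi e_i,e_i)=0$) cancel against each other. The sketch is correct and takes essentially the same route, so there is nothing further to add.
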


\begin{proof}
 Applying the identity \eqref{eq_RmPhi} we have
 \begin{align*}
 \eps_i \bar \Rm(\phi e_i, V_H, \phi e_i, V_H)	&= \eps_i g(\phi \bar \Rm (\phi e_i, V_H) e_i, V_H) + \eps_i \eps (-g(\phi V_H, e_i) g(\phi e_i, V_H) - g(e_i, e_i) g(V_H, V_H)) \\
						&= - \eps_i \bar \Rm(\phi e_i, V_H, e_i, \phi V_H) + \eps( |V_H|^2 - n |V_H|^2)\\
						&= - \eps_i \bar \Rm(e_i, \phi V_H, \phi e_i, V_H) + \eps(1-n)|V_H|^2\\
						&= - \eps_i g(\phi \bar \Rm(e_i, \phi V_H) e_i, V_H) \\
						&- \eps_i \eps (-g(\phi V_H, e_i) g(\phi e_i, V_H) - g(e_i, e_i) g(V_H, V_H))) + \eps(1-n)|V_H|^2\\
						&= \eps_i \bar \Rm(e_i, \phi V_H, e_i, \phi V_H) - \eps( |V_H|^2 - \eps_i g(e_i, e_i) |V_H|^2) + \eps(1-n)|V_H|^2\\
						&= \eps_i \bar \Rm(e_i, \phi V_H, e_i, \phi V_H). 
 \end{align*}
\end{proof}

So the second term in our second variation is

\begin{align*}
\eps_i \bar \Rm (e_i, V, e_i, V)	&=  -\bar \Ric(V,V) - \eps_i \bar \Rm(\phi e_i, V, \phi e_i, V) - \eps \bar \Rm(\xi, V, \xi, V) \\
					&= - \bar \Ric(V,V) - \eps_i f^2 \bar \Rm( \phi e_i, \xi, \phi e_i, \xi) - 2 \eps_i f \bar \Rm(\phi e_i, V_H, \phi e_i, \xi)\\
					&- \eps_i \bar \Rm(\phi e_i, V_H, \phi e_i, V_H) - \eps \bar \Rm(\xi, V, \xi, V)\\
					&= -\bar \Ric(V,V) + n f^2 - \eps_i \bar \Rm(\phi e_i, V_H, \phi e_i, V_H) + \eps(|V|^2 - \eps f^2) \\
					&= -\bar \Ric(V,V) + n f^2 - \eps_i \bar \Rm(\phi e_i, V_H, \phi e_i, V_H) + \eps|V_H|^2\\
					&= -\bar \Ric(V,V) + n f^2 -\eps_i \bar \Rm(e_i, \phi V_H, e_i, \phi V_H) + \eps |V_H|^2\\
					&= -\bar \Ric(V_H,V_H) - 2nf^2 + n f^2 + \eps |V_H|^2 \\
					&- \eps_i \biggl ( \Rm(e_i, \phi V_H, e_i, \phi V_H) - g(h(\phi V_h, e_i), h(\phi V_H, e_i)) + g(h(e_i, e_i), h(\phi V_H, \phi V_H)) \biggr ) \\
					&= -\bar \Ric(V_H,V_H) - nf^2 + \eps |V_H|^2 + \Ric(\phi V_H, \phi V_H) + g(A_V, A_V) - g(H, h(\phi V_H, \phi V_H))
\end{align*}
where in the last equality we have used Gauss' formula in Lemma \ref{lemma:gauss} and the fact that, from the definition of the shape operator, we have $A_V e_i =- (\nabla_{e_i} V)^T$ and $A_V = A_{V_H}$. We then compute that $A_{V_H} e_i = \phi h (e_i, \phi V_H)$ and hence

\[
 g(A_V, A_V) = \eps_i g( h(e_i, \phi V_H), h(e_i, \phi V_H)).
\]

So \eqref{formula2var} becomes
\begin{align*}
 \frac{d^2}{dt^2}\biggr |_{t=0} \vol(L_t)	& = \int_L \biggl \{ nf^2 + \frac 1 4 \eps |\nabla f|^2 + \frac 1 4 |\nabla^2 f|^2 - \eps f \Delta f\\
 								&-n f^2 - \frac 1 4 \bar \Ric(\phi \nabla f, \phi \nabla f) + \frac 1 4 \eps |\nabla f|^2 + \frac 1 4 \Ric (\nabla f, \nabla f)\\
 								&\underbrace{- g( H, h(\phi V_H, \phi V_H)}_{=- \frac 1 4 g(h(\nabla f, \nabla f), H)} \\
								&- \frac 1 4 g(h(\nabla f, \nabla f), H) + \frac 1 4 g(H, \phi \nabla f)^2 \biggr \} dv_0 \\
						&= \frac 1 4 \int_L \biggl \{ -2 \eps |\nabla f|^2 - \bar \Ric (\phi \nabla f, \phi \nabla f) + |\nabla^2 f|^2 + \Ric(\nabla f, \nabla f)\\
						&-2 g(H, h(\nabla f, \nabla f) + g(H, \phi \nabla f)^2 \biggr \} dv_0 
\end{align*}

We can group the Hessian term and the Ricci term by means of the pseudo-Riemannian Bochner formula\footnote{The formula on p.~609 of \cite{anc_geo} differs by a sign as they define $\Delta = \div (\nabla \cdot)$.} in the Appendix of \cite{anc_geo}: 
\[ \frac 1 2 \Delta |\nabla f|^2 = \Ric(\nabla f, \nabla f) - g(\nabla f, \nabla(\Delta f)) + |\nabla^2 f|^2 \] that, after integration, gives

\[
\int_L (\Delta f)^2 = \int_L ( \Ric(\nabla f, \nabla f) + |\nabla^2 f|^2)
\]
since $X= f \xi + \frac 1 2 \phi \nabla f$ vanishes on $\partial L$ and get the following.

\begin{thm} \label{thm:2var}
 Let $L$ be a L-minimal Legendrian submanifold, possibly with boundary $\partial L$, of a pseudo-Sasakian manifold $(M, \eta, \xi, g,\phi, \eps)$.
 
 Then, in the normal Legendrian direction $V = f \xi + \frac 1 2 \phi \nabla f$ vanishing on $\partial L$, the second variation of the volume is
\begin{align*}
 \frac{d^2}{dt^2}\biggr |_{t=0} \vol(L_t) = \frac 1 4 \int_L \biggl \{ 	&(\Delta f)^2 -2 \eps |\nabla f|^2 - \bar \Ric(\phi \nabla f, \phi \nabla f) \\
									&-2 g(H, h(\nabla f, \nabla f) + g(H, \phi \nabla f)^2  \biggr \} dv_0
\end{align*}
where $H$ is the mean curvature vector and $dv_0$ is the volume form of $(L, g)$.
\end{thm}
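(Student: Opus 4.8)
The plan is to obtain the formula as a direct specialization of the general second variation \eqref{formula2var} in Proposition \ref{prop_second_Lmin} to the Legendrian field $V = f\xi + \tfrac{1}{2}\phi\nabla f$, evaluating each of the four terms of \eqref{formula2var} with the pseudo-Sasakian structure equations. First I would compute the normal-connection term $\sum_i \eps_i g(\nabla^\perp_{e_i} V, \nabla^\perp_{e_i} V)$. Differentiating $V$ along $e_i$ and using $\bar\nabla_{e_i}\xi = \eps\phi e_i$ from Proposition \ref{prop:nablaxi} together with the derivative \eqref{eq_Lem_Sas1bis} of $\phi$, the normal part of $\bar\nabla_{e_i} V$ is $\eps f\phi e_i + \tfrac12 (e_i f)\xi + \tfrac12\phi\nabla_{e_i}\nabla f$; squaring, tracing over the orthonormal frame, and recognizing $\sum_i \eps_i g(e_i, \nabla_{e_i}\nabla f) = \Delta f$ yields $n f^2 + \tfrac14\eps|\nabla f|^2 + \tfrac14|\nabla^2 f|^2 - \eps f\Delta f$.

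Next I would evaluate the curvature term $\sum_i \eps_i \bar\Rm(e_i, V, e_i, V)$. Completing $\{e_i\}$ to the adapted frame $\{e_i, \phi e_i, \xi\}$ of $TM$ along $L$, the full trace is the Ricci tensor, so (with the paper's sign convention) $\sum_i \eps_i \bar\Rm(e_i, V, e_i, V) = -\bar\Ric(V,V) - \sum_i \eps_i \bar\Rm(\phi e_i, V, \phi e_i, V) - \eps\bar\Rm(\xi, V, \xi, V)$. Splitting $V = f\xi + V_H$ and applying the Jacobi-type identities \eqref{eq_Lem_Sas6}, \eqref{eq_Lem_Sas7} and Lemma \ref{lemma:Rm} collapses all the $\xi$-contributions to the elementary terms $n f^2$ and $\eps|V_H|^2$, leaving only $\sum_i\eps_i\bar\Rm(\phi e_i, V_H, \phi e_i, V_H)$. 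The auxiliary symmetry lemma rewrites this as $\sum_i\eps_i\bar\Rm(e_i, \phi V_H, e_i, \phi V_H)$, which Gauss' formula (Lemma \ref{lemma:gauss}) converts into the intrinsic Ricci term $\Ric(\phi V_H, \phi V_H)$ plus the second fundamental form contributions $g(A_V, A_V)$ and $-g(H, h(\phi V_H, \phi V_H))$.

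The decisive algebraic step is reconciling the second fundamental form terms, and I expect this to be the main obstacle, since the interplay between the pointwise signs $\eps_i$, the global sign $\eps$, and the K\"ahler-like curvature symmetry must be tracked carefully. Since $\phi V_H = -\tfrac12\nabla f$ (using $\phi^2 = -\id + \eta\otimes\xi$ and $\eta(\nabla f)=0$ on the Legendrian $L$), one gets $h(\phi V_H, \phi V_H) = \tfrac14 h(\nabla f, \nabla f)$, so $g(H, h(\phi V_H, \phi V_H)) = \tfrac14 g(H, h(\nabla f, \nabla f))$; combined with the explicit $-\tfrac14 g(h(\nabla f, \nabla f), H)$ term of \eqref{formula2var} this assembles the coefficient $-\tfrac12 g(H, h(\nabla f, \nabla f))$ of the theorem. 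For the shape operator, Lemma \ref{lemma:2ff}(2) gives $A_V = A_{V_H}$ with $A_{V_H} e_i = \phi h(e_i, \phi V_H)$, whence $g(A_V, A_V) = \sum_i\eps_i g(h(e_i, \phi V_H), h(e_i, \phi V_H))$ exactly matches the term produced by Gauss' formula and cancels the $-g(A_V, A_V)$ in \eqref{formula2var}. Finally, Lemma \ref{lemma:2ff}(1) gives $g(H,\xi)=0$, so $g(H,V)^2 = \tfrac14 g(H, \phi\nabla f)^2$, producing the last term.

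After all these substitutions the integrand carries $|\nabla^2 f|^2 + \Ric(\nabla f, \nabla f)$, which I would collapse by the pseudo-Riemannian Bochner identity $\tfrac12\Delta|\nabla f|^2 = \Ric(\nabla f, \nabla f) - g(\nabla f, \nabla(\Delta f)) + |\nabla^2 f|^2$. Integrating over $L$ and using that $V$, hence $f$, vanishes on $\partial L$, the total-divergence term drops out and the remaining term integrates by parts to give $\int_L (|\nabla^2 f|^2 + \Ric(\nabla f, \nabla f))\, dv_0 = \int_L (\Delta f)^2\, dv_0$. Substituting this and collecting the surviving terms then produces the stated formula.
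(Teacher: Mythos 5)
Your proposal is correct and follows essentially the same route as the paper: specializing Proposition~\ref{prop_second_Lmin} to $V=f\xi+\tfrac12\phi\nabla f$, handling the curvature trace via the symmetry lemma and Gauss' formula, reconciling the second fundamental form terms through $\phi V_H=-\tfrac12\nabla f$ and $A_{V_H}e_i=\phi h(e_i,\phi V_H)$, and finishing with the Bochner identity and integration by parts. The only quibble is a sign-convention slip in your first paragraph: with the paper's (positive) Laplacian one has $\sum_i\eps_i g(e_i,\nabla_{e_i}\nabla f)=-\Delta f$, which is what actually yields the $-\eps f\Delta f$ term you correctly state.
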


\subsection{The minimal case}
Let us now consider the more special case where $L$ is minimal and \emph{Riemannian} (i.e. $H=0$) and $M$ is $\eta$-Sasaki-Einstein, i.e. for some $A,B \in \mathbb R,$ it holds
\[
\bar \Ric = A g + B \eta \otimes \eta
\]
where it must be $B = 2n - \eps A$.

In this case our second variation formula reads

\begin{align*}
 \frac{d^2}{dt^2}\biggr |_{t=0} \vol(L_t)	&= \frac 1 4 \int_L \biggl \{  |\Delta f|^2 - (A+2 \eps)|\nabla f|^2 \biggr \} dv_0
\end{align*}
and we recall that $|df|^2 \geq 0$ being $L$ Riemannian.

Note that for the Riemannian case $\eps = 1$ we have reobtained the formula of \cite{Ono} (see also \cite{ohnita}).

In this case we have a sufficient condition for Legendrian stability coming from the positivity of the second term in the last expression.
\begin{prop}
 A minimal Legendrian $n$-submanifold in a pseudo-Sasakian $\eta$-Einstein manifold with constant $A$ is Legendrian stable if 
 \begin{equation} \label{eq:alwaysstable}
 A + 2 \eps \leq 0.
 \end{equation}
 In particular, if $M$ is Lorentzian-Sasaki-Einstein we have $A = -2n$ and $\eps = -1$ so $L$ is always Legendrian stable.
\end{prop}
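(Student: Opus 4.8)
The plan is to read the claim directly off the simplified second variation formula established immediately before the statement, since no new geometric input is needed beyond a sign inspection.

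First I would recall that Legendrian stability means $\frac{d^2}{dt^2}\bigr|_{t=0}\vol(L_t)\geq 0$ for every Legendrian deformation, and that every such normal variation field vanishing on $\partial L$ has the form $V = f\xi + \frac12\phi\nabla f$ for a function $f$ with $f|_{\partial L}=0$, as derived from $\lieder_V\eta = 0$ earlier in the section. Thus it suffices to verify nonnegativity of the second variation over all admissible $f$. Specializing Theorem~\ref{thm:2var} to the minimal case $H=0$ and the $\eta$-Einstein case $\bar\Ric = Ag + (2n-\eps A)\eta\otimes\eta$, the curvature and mean-curvature terms drop out, and—using the pseudo-Riemannian Bochner identity to trade $|\nabla^2 f|^2 + \Ric(\nabla f,\nabla f)$ for $(\Delta f)^2$—the formula reduces to $\frac14\int_L\{(\Delta f)^2 - (A+2\eps)|\nabla f|^2\}\,dv_0$, exactly as displayed above the statement.

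I would then argue pointwise. The term $(\Delta f)^2$ is nonnegative; and since $L$ is Riemannian (space-like), the induced metric is positive definite so $|\nabla f|^2 \geq 0$. Hence, whenever $A+2\eps \leq 0$, the summand $-(A+2\eps)|\nabla f|^2$ is also nonnegative. The integrand is therefore pointwise $\geq 0$, so is its integral, and Legendrian stability follows. For the final assertion I would note that a full pseudo-Sasaki-Einstein metric is the $\eta$-Einstein case with vanishing $\eta\otimes\eta$ coefficient, i.e. $2n-\eps A = 0$, so $A = 2n\eps$; in Lorentzian signature $\eps=-1$ this forces $A=-2n$ and $A+2\eps = -2n-2 \leq 0$, so the criterion \eqref{eq:alwaysstable} applies and $L$ is always stable.

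I do not expect any genuine obstacle: the argument is a direct sign check on an already-established identity. The only points that require care are that the reduction to the two-term formula really uses the space-like hypothesis (which is what guarantees $|\nabla f|^2 \geq 0$, so that the sign of $A+2\eps$ controls the sign of the second summand), and that the boundary contributions in the Bochner integration by parts vanish precisely because $f|_{\partial L}=0$.
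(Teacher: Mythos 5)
Your argument is correct and coincides with the paper's own (essentially one-line) justification: the authors simply observe that in the simplified formula $\frac14\int_L\{(\Delta f)^2-(A+2\eps)|\nabla f|^2\}\,dv_0$ both terms are nonnegative when $A+2\eps\le 0$, using that $L$ is Riemannian so $|\nabla f|^2\ge 0$, and that the Lorentzian-Sasaki-Einstein case forces $A=-2n$, $\eps=-1$. Your added remarks on the role of the space-like hypothesis and the boundary condition $f|_{\partial L}=0$ are consistent with the paper and do not change the route.
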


With the same argument used in the Sasakian and K\"ahler case, using that $L$ is a Riemannian manifold so the space $C^\infty(L)$ admits a $L^2$-orthogonal decomposition given by the eigenspaces of the Laplacian, we can prove the following.

\begin{prop} \label{prop:critlambda1}
 The minimal space-like Legendrian $L$ in the pseudo-Sasaki $\eta$-Einstein manifold $M$ is Legendrian stable if and only if its first eigenvalue of the Laplacian on functions $\lambda_1(L)$ satisfies
 \begin{equation} \label{est:lambda1}
  \lambda_1(L) \geq A+2 \eps.
 \end{equation}

\end{prop}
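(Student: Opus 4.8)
The plan is to turn the stability question into a spectral one, exactly as in the arguments of Oh and Ono alluded to before the statement. Since $L$ is space-like, the induced metric $\I^*g$ is Riemannian, so (taking $L$ closed, as the reference to $\lambda_1(L)$ presupposes) the Laplacian $\Delta$ on functions is self-adjoint with discrete spectrum $0 = \lambda_0 < \lambda_1 \le \lambda_2 \le \cdots$ and an $L^2(L)$-orthogonal basis of eigenfunctions $\{f_k\}$, $\Delta f_k = \lambda_k f_k$. By the computation preceding the statement, every Legendrian direction is of the form $V = f\xi + \frac12\phi\nabla f$ for an arbitrary $f \in C^\infty(L)$, and L-stability is precisely the nonnegativity of the quadratic form
\[
 Q(f) := \frac14\int_L \{ (\Delta f)^2 - (A+2\eps)|\nabla f|^2 \}\, dv_0
\]
for all such $f$.

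First I would expand $f = \sum_k f_k$ in eigenfunctions and use self-adjointness of $\Delta$ together with $L^2$-orthogonality of the $f_k$ to kill all cross terms: integrating by parts (no boundary) gives $\int_L (\Delta f)^2\,dv_0 = \sum_k \lambda_k^2 \|f_k\|^2$ and $\int_L |\nabla f|^2\,dv_0 = \sum_k \lambda_k \|f_k\|^2$, where $\|f_k\|^2 = \int_L f_k^2\,dv_0$. Hence the form diagonalizes as
\[
 Q(f) = \frac14 \sum_{k\ge 1} \lambda_k\bigl(\lambda_k - (A+2\eps)\bigr)\,\|f_k\|^2 ,
\]
the constant mode $k=0$ dropping out since $\nabla f_0 = 0$ and $\Delta f_0 = 0$.

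With this normal form both implications are immediate. For sufficiency, if $\lambda_1 \ge A+2\eps$ then by monotonicity $\lambda_k \ge \lambda_1 \ge A+2\eps$ for every $k\ge1$, and since $\lambda_k > 0$ each summand is nonnegative, so $Q(f) \ge 0$ for all $f$ and $L$ is L-stable. For necessity, I would test stability against a single first eigenfunction $f = f_1$ (which is nonconstant because $\lambda_1 > 0$, so the corresponding $V$ is a genuine nonzero Legendrian direction): then $Q(f_1) = \frac14\lambda_1(\lambda_1 - (A+2\eps))\|f_1\|^2 \ge 0$, and dividing by the positive quantity $\frac14\lambda_1\|f_1\|^2$ yields $\lambda_1 \ge A+2\eps$.

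The argument has no serious obstacle; the only points requiring care are bookkeeping ones. One must fix the sign convention so that $\Delta$ is the nonnegative Laplacian consistent with the Bochner identity used above (eigenvalues $\lambda_k \ge 0$), note that the space-like hypothesis is exactly what guarantees the Riemannian spectral theory and the $L^2$-orthogonal decomposition of $C^\infty(L)$, and check that the cross terms vanish by self-adjointness. The reduction of the infinitely many inequalities \emph{``$\lambda_k \ge A+2\eps$ for all $k\ge1$''} to the single bound on $\lambda_1$ is then just monotonicity of the spectrum.
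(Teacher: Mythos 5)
Your proof is correct and follows exactly the route the paper intends: the authors do not write out a proof but simply invoke ``the same argument used in the Sasakian and K\"ahler case,'' namely the $L^2$-orthogonal eigenspace decomposition of $C^\infty(L)$, which is precisely the diagonalization $Q(f) = \frac14\sum_{k\ge1}\lambda_k\bigl(\lambda_k-(A+2\eps)\bigr)\|f_k\|^2$ that you carry out. Your version just fills in the details (cross-term cancellation, testing against $f_1$ for necessity) that the paper leaves to the reader.
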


\section{Lorentzian Sasakian manifolds}
 
\subsection{Tanno deformations} \label{sec:tanno}
The following is a generalization of the well-known Tanno deformations \cite{Tanno}. Starting with a Sasakian manifold  $(M, g, \eta, \xi, \phi)$ one defines for  fixed  $\alpha \in \mathbb R_+$ and $\beta := \alpha+\alpha^2$ 

\begin{equation} \label{gtilde}
\tilde g:=    \tilde g_\alpha := \alpha g - \beta \eta \otimes \eta.
\end{equation}
This is a Lorentzian metric, since it is
$$\tilde g (\xi,\xi)= \alpha g (\xi,\xi) -(\alpha^2+\alpha)=-\alpha^2.$$

For the proof of this Proposition we need the next Lemma.
\begin{lemma} \label{lemma:nabla}
If $\tilde \nabla$ is the Levi-Civita connection of $\tilde g_\alpha$ and $\nabla$ is the one of $g$, then we have
\begin{equation} \label{eq:diffcon}
 \tilde \nabla_X Y = \nabla_X Y - \alpha^{-1} \beta( \eta(X) \phi Y + \eta(Y) \phi X).
\end{equation}
\end{lemma}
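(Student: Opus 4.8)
The plan is to exploit the uniqueness of the Levi-Civita connection: I would define a candidate connection $D$ by $D_X Y := \nabla_X Y - \alpha^{-1}\beta(\eta(X)\phi Y + \eta(Y)\phi X)$, verify that $D$ is torsion-free and compatible with $\tilde g$, and then conclude $D = \tilde\nabla$ by uniqueness. This sidesteps solving the Koszul formula and reduces the lemma to checking two properties.

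First I observe that the correction $S(X,Y) := -\alpha^{-1}\beta(\eta(X)\phi Y + \eta(Y)\phi X)$ is $C^\infty$-bilinear in $X$ and $Y$, so $D = \nabla + S$ is again an affine connection. Since $S$ is manifestly symmetric, $S(X,Y) = S(Y,X)$, and $\nabla$ is torsion-free, the connection $D$ is torsion-free as well. This disposes of one of the two defining properties with essentially no computation.

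The core of the argument is metric compatibility, $(D_X\tilde g)(Y,Z) = 0$. Writing $D = \nabla + S$ and using $\nabla g = 0$ one gets $(D_X\tilde g)(Y,Z) = (\nabla_X\tilde g)(Y,Z) - \tilde g(S(X,Y),Z) - \tilde g(Y,S(X,Z))$. The three ingredients I would feed in are: (i) $(\nabla_X\eta)Y = g(\phi X, Y)$ from \eqref{eq_Lem_Sas2a} (with $\eps=1$, since the base is Sasakian), which yields $(\nabla_X\tilde g)(Y,Z) = -\beta[g(\phi X, Y)\eta(Z) + \eta(Y)g(\phi X, Z)]$ because $\nabla g=0$; (ii) the skew-symmetry $g(\phi X, Y) = -g(X,\phi Y)$ from \eqref{eq_Lem_Sas1bisbis}; and (iii) $\eta(\phi\,\cdot\,) = 0$ (equivalently $\phi\xi=0$), which forces $\eta(S(X,Y)) = 0$ and hence $\tilde g(S(X,Y),Z) = \alpha\, g(S(X,Y),Z)$, so that the $\eta\otimes\eta$ part of $\tilde g$ never pairs against $S$. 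Substituting gives $\tilde g(S(X,Y),Z) = -\beta(\eta(X)g(\phi Y,Z) + \eta(Y)g(\phi X,Z))$, and the factor $\alpha^{-1}\beta$ is exactly what makes $\alpha\, g(S,\cdot\,)$ collapse to a clean $-\beta$; after applying (ii) to the last term $\tilde g(Y,S(X,Z))$, all six resulting terms cancel in pairs.

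The one place that needs care — the main obstacle, such as it is — is the sign bookkeeping in that final cancellation: the skew-symmetry of $\phi$ must be applied consistently to $\tilde g(Y, S(X,Z))$ so its contributions line up with those from $(\nabla_X\tilde g)$ and from $\tilde g(S(X,Y),Z)$. Once $D$ is shown to be torsion-free and $\tilde g$-metric, uniqueness of the Levi-Civita connection gives $\tilde\nabla = D$, which is precisely \eqref{eq:diffcon}. Alternatively, instead of guessing $S$ one could derive it directly from the difference-tensor identity $2\tilde g(S(X,Y),Z) = (\nabla_X\tilde g)(Y,Z) + (\nabla_Y\tilde g)(X,Z) - (\nabla_Z\tilde g)(X,Y)$, solving for $S(X,Y)$ by treating the $\xi$-component and the transversal component separately; this reaches the same formula but with more computation.
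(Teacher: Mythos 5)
Your proposal is correct and follows essentially the same route as the paper: both define the candidate connection $\nabla + S$ with the symmetric correction tensor $S$, verify torsion-freeness and $\tilde g$-compatibility using $(\nabla_X\eta)Y = g(\phi X, Y)$ together with the skew-symmetry of $\phi$, and conclude by uniqueness of the Levi-Civita connection. The sign bookkeeping you flag does work out, so there is nothing to add.
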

For a proof in the case $\alpha=1$ and $\beta=2$ using Koszul's formula we refer to Proposition 3.3 of Brunetti and Pastore \cite{brunpast}. The Riemannian case is due to Tanno \cite{Tanno}, see also \cite[Chap.~7]{monoBG}. We remark a sign difference with \cite{brunpast} due to the opposite convention in the definition of the fundamental $2$-form.

\begin{proof} 
Define $\tilde \nabla$ by
\[
 \tilde \nabla_X Y = \nabla_X Y + S_X Y,
 \]
 where 
 \[
S_XY:= -\alpha^{-1} \beta \biggl[ \eta(X) \phi Y + \eta(Y) \phi X \biggr].
\]

The tensor field $S$ is symmetric, hence $ \tilde \nabla$ is a torsion-free connection.

We compute
\begin{eqnarray*}
(\tilde \nabla_X \tilde g) (Y,Z) 	&=&   \nabla_X \tilde g (Y,Z) - \tilde g (S_X Y,Z ) - \tilde g (Y,S_XZ) \\
 									&=& - \beta \biggl ( (\nabla_X\eta)(Y)\eta(Z) + (\nabla_X\eta)(Z)\eta(Y) \biggr )\\ 
									&  &+\alpha^{-1} \beta \, \tilde g\biggl(\biggl[ \eta(X) \phi Y + \eta(Y) \phi X \biggr], Z \biggr)\\
									&  &+\alpha^{-1} \beta \,\tilde g\biggl(Y, \biggl[ \eta(X) \phi Z + \eta(Z) \phi X \biggr] \biggr)\\
									&=& 0,
\end{eqnarray*}
where we have used that $\nabla_X \eta (Y) = g(\phi X, Y)$, as a consequence of Proposition \ref{prop:nablaxi}.

Hence, $ \tilde \nabla$ is metric for $\tilde g$ and has no torsion, so it coincides with the Levi-Civita connection of $\tilde g$.
\end{proof}

The behavior \eqref{eq:diffcon} of the Levi-Civita connection of $\tilde g_\alpha$ allows us to prove the following.
\begin{prop}\label{prop:lor}
 Let $(\eta, \xi, \phi, g)$ be a Sasakian structure. Then for $\alpha >0$ the new structure $(\alpha \eta, \alpha^{-1} \xi, \phi, \tilde g_\alpha)$ is Lorentzian Sasakian, where $\tilde g_\alpha$ is defined in \eqref{gtilde}.
\end{prop}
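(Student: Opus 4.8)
The plan is to check that the deformed tuple $(\alpha\eta,\alpha^{-1}\xi,\phi,\tilde g_\alpha)$ satisfies all the defining axioms of a Lorentzian Sasakian structure collected in Section~1, splitting the verification into the pointwise algebraic compatibilities (the almost contact metric and contact conditions) and the single differential identity~\eqref{defnabla} expressing normality. Throughout I write $\tilde\eta=\alpha\eta$, $\tilde\xi=\alpha^{-1}\xi$, $\tilde\phi=\phi$, and I exploit that the original structure is Sasakian with $\eps=1$, so that $\phi^2=-\id+\eta\otimes\xi$, $\eta(\phi\,\cdot)=0$ and $(\nabla_X\phi)Y=\eta(Y)X-g(X,Y)\xi$.

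First I would dispose of the algebraic conditions, which are routine. That $(\alpha\eta,\alpha^{-1}\xi,\phi)$ is again an almost contact structure is immediate since $\tilde\eta(\tilde\xi)=\eta(\xi)=1$ and $\tilde\phi^2=\phi^2=-\id+\eta\otimes\xi=-\id+\tilde\eta\otimes\tilde\xi$. The computation already recorded after~\eqref{gtilde} gives $\tilde g(\xi,\xi)=-\alpha^2$, whence $\tilde g(\tilde\xi,\tilde\xi)=-1$, so the signature is Lorentzian and $\tilde\eps=-1$. The remaining compatibilities $\tilde\eta(X)=\tilde\eps\,\tilde g(\tilde\xi,X)$ and $\tilde g(\tilde\phi X,\tilde\phi Y)=\tilde g(X,Y)-\tilde\eps\,\tilde\eta(X)\tilde\eta(Y)$, together with the contact condition $d\tilde\eta=2\tilde g(\tilde\phi\,\cdot,\cdot)$, all follow by expanding $\tilde g_\alpha=\alpha g-\beta\,\eta\otimes\eta$ and using $\eta(\phi\,\cdot)=0$; each reduces to the numerical identity $\beta=\alpha+\alpha^2$ (equivalently $\alpha-\beta=-\alpha^2$ and $\alpha^2-\beta=-\alpha$).

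The heart of the proof is the normality identity~\eqref{defnabla} for the deformed data, namely $(\tilde\nabla_X\tilde\phi)Y=\tilde\eps\,\tilde\eta(Y)X-\tilde g(X,Y)\tilde\xi$ with $\tilde\eps=-1$. Here I would invoke Lemma~\ref{lemma:nabla}, writing $\tilde\nabla_XY=\nabla_XY-\alpha^{-1}\beta(\eta(X)\phi Y+\eta(Y)\phi X)$, and expand
\[
(\tilde\nabla_X\tilde\phi)Y=\tilde\nabla_X(\phi Y)-\phi(\tilde\nabla_XY).
\]
Substituting the connection difference into both terms and simplifying with $\eta(\phi\,\cdot)=0$ and $\phi^2=-\id+\eta\otimes\xi$, the contributions proportional to $\eta(X)$ cancel, the Levi-Civita part collapses to $(\nabla_X\phi)Y$, and one is left with $(\nabla_X\phi)Y+\alpha^{-1}\beta\,\eta(Y)(-X+\eta(X)\xi)$. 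Inserting the original Sasakian identity for $(\nabla_X\phi)Y$ and collecting terms, the coefficient of $\eta(Y)X$ becomes $1-\alpha^{-1}\beta=-\alpha$, so the expression equals $-\alpha\,\eta(Y)X-g(X,Y)\xi+\alpha^{-1}\beta\,\eta(X)\eta(Y)\xi$. Finally I would observe that this coincides with $\tilde\eps\,\tilde\eta(Y)X-\tilde g(X,Y)\tilde\xi$ after expanding $\tilde g_\alpha$ and $\tilde\xi=\alpha^{-1}\xi$, completing the verification.

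I expect the only delicate point to be the bookkeeping of the powers of $\alpha$ in this last step: one must track simultaneously the rescalings of $\eta$ and $\xi$ and the correction $-\beta\,\eta\otimes\eta$ in the metric, and confirm that they conspire precisely because $\beta=\alpha+\alpha^2$. This is the one place where the specific value of $\beta$ is forced rather than merely convenient, so I would present that cancellation explicitly; everything else is a direct substitution once Lemma~\ref{lemma:nabla} is in hand.
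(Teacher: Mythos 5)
Your proposal is correct: the algebraic compatibilities reduce to $\beta=\alpha+\alpha^2$ exactly as you say, and your uniform computation of $(\tilde\nabla_X\phi)Y$ via Lemma~\ref{lemma:nabla} (the $\eta(X)$-terms cancel, leaving $(\nabla_X\phi)Y+\alpha^{-1}\beta\,\eta(Y)(-X+\eta(X)\xi)$, which matches $-\tilde\eta(Y)X-\tilde g(X,Y)\tilde\xi$) checks out. This is essentially the paper's argument; the only difference is presentational, in that the paper splits the normality check into the cases $X,Y\in\mathcal D$, $Y=\xi$, and $X=Y=\xi$ using that $\tilde\nabla=\nabla$ on $\mathcal D$, whereas you carry out a single tensorial computation valid for all $X,Y$ (and you are somewhat more explicit than the paper about the metric-compatibility and contact conditions).
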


\begin{proof}
For completeness sake we prove this Proposition. Let $\tilde \xi = \alpha^{-1} \xi$.
First we observe, that $Z\in \{\xi,\tilde \xi\}$ satisfies $ \mathcal L_Z g =0 \mbox{ and  } \mathcal L_Z \eta =0$  
and as a result $ \mathcal L_Z \tilde g =0.$ Hence $\tilde \xi$ is a Killing vector field of length $-1.$  \\
Moreover,  for the second term of $\phi^2$    using $\tilde g (\Reeb,\Reeb)=-\alpha^2$ one has
$g(X,\Reeb)\Reeb = -\tilde g(X,\tilde \Reeb)\tilde \Reeb,$ which shows, that the relation \eqref{eq_Lem_Sas1} is satisfied. \\
Let us note, that it is 
$ \tilde \nabla_X Y = \nabla_X Y, \mbox{ for } X,Y \in \mathcal D$  and $\tilde \nabla_\Reeb \Reeb = \nabla_\Reeb \Reeb=0.$
In order to check  \eqref{defnabla} we observe
$$-g(X,Y) \Reeb   +g(Y,\Reeb)X = -g(X,Y) \Reeb =-\tilde g(X,Y)\tilde \Reeb, \mbox{ for } X,Y \in \mathcal D $$
and  $(\nabla_\Reeb \phi)\Reeb =0 = (\tilde\nabla_{\tilde\Reeb} \phi) \tilde \Reeb.$
It remains to compute the expression for $X \in \mathcal D$
$$(\nabla_X \phi)\Reeb = -\phi (\nabla_X \Reeb)= -\phi \biggl(\tilde \nabla_X \Reeb - \alpha^{-1} \beta( \phi X) \biggr) =  (\tilde \nabla_X \phi)\Reeb -\alpha^{-1} \beta X $$ 
and $ -g(X,\Reeb) \Reeb   +g(\Reeb,\Reeb)X = -\tilde g(\tilde \Reeb,\tilde \Reeb)X.$ This shows
\begin{eqnarray*}
(\tilde \nabla_X \phi) \tilde \Reeb &=& \alpha^{-2} \beta X -\alpha^{-1}\tilde g(\tilde \Reeb,\tilde \Reeb)X = 
\frac{\alpha +\alpha^2-\alpha}{\alpha^{2}}\tilde g(\tilde \Reeb,\tilde \Reeb)X\\&=&-\tilde g(X,\tilde \Reeb) \tilde\Reeb   +\tilde g(\tilde\Reeb,\tilde\Reeb)X
\end{eqnarray*}
and finishes the proof of Proposition \ref{prop:lor}, since the converse statement goes along the same lines.
\end{proof}

We can compute how the Ricci tensor behaves under these deformations.

\begin{prop} \label{prop:loreinst}
  Let $(M, g, \eta, \xi, \phi)$ be a Sasakian $\eta$-Einstein manifold with $\Ric = Ag + (2n+A) \eta \otimes \eta$ and let $\tilde g_\alpha$ as above. Then $\tilde g_\alpha$ is Lorentzian Sasakian $\eta$-Einstein with $\tilde \Ric = A_\alpha \tilde g_\alpha + (2n +A_\alpha) \eta \otimes \eta$  for $A_\alpha = \frac{A+2}{\alpha} + 2$.
 \end{prop}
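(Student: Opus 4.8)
The plan is to compute the deformed Ricci tensor $\tilde\Ric$ directly from the connection difference formula \eqref{eq:diffcon} established in Lemma~\ref{lemma:nabla}. Writing $S_X Y = -\alpha^{-1}\beta(\eta(X)\phi Y + \eta(Y)\phi X)$, I would first express the deformed curvature $\tilde\Rm$ in terms of $\Rm$, $S$, and the covariant derivatives $\nabla S$ using the standard formula for how curvature changes under a connection perturbation, namely
\begin{equation*}
 \tilde\Rm(X,Y)Z = \Rm(X,Y)Z + (\nabla_X S)_Y Z - (\nabla_Y S)_X Z + S_X S_Y Z - S_Y S_X Z.
\end{equation*}
Then I would trace over an adapted orthonormal-type frame to obtain $\tilde\Ric$, keeping careful track of the fact that $\tilde g$ rescales and tilts the metric so that the trace must be taken with respect to $\tilde g$ (or equivalently with $g$-traces and correction terms on the $\xi$-direction).

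First I would simplify $\nabla S$ using the Sasakian identities available in the earlier Lemma: from Proposition~\ref{prop:nablaxi} we have $\nabla\xi = \eps\phi = \phi$ (here $\eps=1$ on the Sasakian side), and $(\nabla_X\phi)Y = \eta(Y)X - g(X,Y)\xi$ from \eqref{defnabla}. These let me differentiate the $\eta$ and $\phi$ factors in $S$ explicitly, reducing $(\nabla_X S)_Y Z$ to algebraic expressions in $g$, $\eta$, $\phi$, and $\xi$. The quadratic terms $S_X S_Y Z$ are purely algebraic and collapse using $\phi^2 = -\id + \eta\otimes\xi$ \eqref{eq_Lem_Sas1}. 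I expect most of the $\phi$-valued contributions to cancel or to reorganize into multiples of $g$ and $\eta\otimes\eta$ after tracing, since the deformation preserves the $\eta$-Einstein form; the key structural fact to exploit is that $S$ vanishes on the contact distribution $\mathcal D$ (only the $\xi$-directions contribute), which dramatically limits the number of nonzero terms.

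The cleanest route is to split the trace into the $\mathcal{D}$-part and the $\xi$-part. Since $\tilde g = \alpha g - \beta\,\eta\otimes\eta$ restricts to $\alpha g$ on $\mathcal D$ and gives $\tilde g(\tilde\xi,\tilde\xi) = -1$, I would compute $\tilde\Ric(X,Y)$ for $X,Y\in\mathcal D$, then $\tilde\Ric(X,\tilde\xi)$, then $\tilde\Ric(\tilde\xi,\tilde\xi)$, and finally verify that the three outputs assemble into $A_\alpha \tilde g_\alpha + (2n+A_\alpha)\eta\otimes\eta$. On $\mathcal D$, the hypothesis gives $\Ric(X,Y) = A\,g(X,Y)$, and the curvature-correction terms should contribute an additional multiple of $g(X,Y)$; matching the coefficient against $A_\alpha\,\tilde g = A_\alpha\,\alpha\,g$ on $\mathcal D$ is precisely what should pin down $A_\alpha = (A+2)/\alpha + 2$. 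The consistency on the $\tilde\xi$-direction is essentially forced, since for any Lorentzian-Sasakian manifold the analogue of \eqref{Rici_in_Reeb} fixes $\tilde\Ric(\tilde\xi,\tilde\xi)$, which must equal $2n\cdot\tilde g(\tilde\xi,\tilde\xi) = -2n$ in the $\eps=-1$ normalization and thus checks the trace of the ansatz automatically.

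The main obstacle will be the bookkeeping of signs and normalizations: the paper uses $\eps=1$ for the original Sasakian structure but $\eps=-1$ for the target Lorentzian one, and $\tilde\xi=\alpha^{-1}\xi$ has $\tilde g(\tilde\xi,\tilde\xi)=-1$, so every appearance of $\eta$, $\xi$, and the trace normalization must be converted consistently between the two structures. In particular $\eta$ is the \emph{old} contact form while the new one is $\alpha\eta$, and the factor $\beta=\alpha+\alpha^2$ couples these rescalings in the correction terms; a single misplaced factor of $\alpha$ will corrupt the final constant. I would therefore fix conventions at the outset, carry out the $\nabla S$ and $S\cdot S$ computations with $\eta,\xi,\phi$ all referring to the original Sasakian data, and only at the very end rewrite the answer in terms of $\tilde g_\alpha$ and $\alpha\eta$ to read off $A_\alpha$.
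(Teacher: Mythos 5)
Your proposal follows essentially the same route as the paper: the connection difference from Lemma~\ref{lemma:nabla} is turned into a curvature difference (the paper records exactly this for $X,Y,Z\in\mathcal D$, where $S$ kills most terms of the general perturbation formula you write down), and $\tilde\Ric$ is then obtained by tracing over a $\tilde g$-adapted frame with the minus sign on the timelike $\tilde\xi$-direction, the $\xi$-components being automatic. One slip worth correcting: identity \eqref{Rici_in_Reeb} reads $\Ric(\xi,\xi)=2n$ for \emph{any} pseudo-Sasakian structure, with no factor of $\eps$ or $|\xi|^2$, so the forced value is $\tilde\Ric(\tilde\xi,\tilde\xi)=2n$, not $-2n$; this does match the ansatz, since the new contact form is $\alpha\eta$ (which evaluates to $1$ on $\tilde\xi=\alpha^{-1}\xi$), giving $A_\alpha\tilde g(\tilde\xi,\tilde\xi)+(2n+A_\alpha)=-A_\alpha+2n+A_\alpha=2n$. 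Had you carried out the check with $-2n$, your consistency test on the $\tilde\xi$-direction would have appeared to fail even though the main $\mathcal D$-computation pinning down $A_\alpha$ is sound.
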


From Lemma \ref{lemma:nabla} we obtain the following about the curvature tensor.
 
 \begin{lemma} \label{lemma:Rm}
 The curvature tensors $\tilde \Rm$ of $\tilde g_\alpha$ and $\Rm$ of $g$ are related by
 \[
  \tilde \Rm(X,Y)Z = \Rm(X,Y)Z + \alpha^{-1} \beta  \,  \biggl ( g(\phi Y, Z) \phi X - g(\phi X, Z) \phi Y - 2g(\phi X, Y) \phi Z \biggr)
 \]
 for $X,Y,Z$ in $\mathcal D$.
   \end{lemma}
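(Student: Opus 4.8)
The plan is to feed the connection difference from Lemma~\ref{lemma:nabla} into the general formula relating the curvature tensors of two torsion-free connections that differ by a tensor. Writing $\tilde\nabla_X Y = \nabla_X Y + S_X Y$ with $S_X Y = -\alpha^{-1}\beta(\eta(X)\phi Y + \eta(Y)\phi X)$ as in Lemma~\ref{lemma:nabla}, a direct expansion of $\tilde\Rm(X,Y)Z = \tilde\nabla_X\tilde\nabla_Y Z - \tilde\nabla_Y\tilde\nabla_X Z - \tilde\nabla_{[X,Y]}Z$ yields
\[
\tilde\Rm(X,Y)Z = \Rm(X,Y)Z + (\nabla_X S)_Y Z - (\nabla_Y S)_X Z + S_X S_Y Z - S_Y S_X Z,
\]
where the terms $S_{\nabla_X Y}Z - S_{\nabla_Y X}Z = S_{[X,Y]}Z$ cancel against the contribution of $\tilde\nabla_{[X,Y]}Z$ because $\nabla$ is torsion-free, and the cross terms $S_X\nabla_Y Z$, $S_Y\nabla_X Z$ cancel by antisymmetrization. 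I would record this identity first, either by this short expansion or by citing it as standard.

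Next I would restrict all three arguments to the contact distribution $\mathcal D$, so that $\eta(X) = \eta(Y) = \eta(Z) = 0$. Then $S_Y Z = S_X Z = 0$, which immediately kills the two quadratic terms $S_X S_Y Z - S_Y S_X Z$. Thus only the covariant derivatives of $S$ survive, and the whole computation reduces to evaluating $(\nabla_X S)_Y Z - (\nabla_Y S)_X Z$ on $\mathcal D$.

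For the surviving term I would apply the product rule $(\nabla_X S)_Y Z = \nabla_X(S_Y Z) - S_{\nabla_X Y}Z - S_Y \nabla_X Z$ and differentiate $S_Y Z = -\alpha^{-1}\beta(\eta(Y)\phi Z + \eta(Z)\phi Y)$ as a tensor. On $\mathcal D$ the undifferentiated $\eta(Y)$, $\eta(Z)$ vanish, so only the terms where $\eta$ is hit by $\nabla$ remain, giving
\[
(\nabla_X S)_Y Z = -\alpha^{-1}\beta\bigl((\nabla_X\eta)(Y)\,\phi Z + (\nabla_X\eta)(Z)\,\phi Y\bigr).
\]
Inserting the Sasakian identity $(\nabla_X\eta)(Y) = g(\phi X, Y)$ — the same one used in the proof of Lemma~\ref{lemma:nabla}, a consequence of Proposition~\ref{prop:nablaxi} — and forming the antisymmetrization then produces the three-term expression.

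The only genuine bookkeeping step, and the place where the coefficient $2$ appears, is the simplification of $g(\phi X, Y)\phi Z - g(\phi Y, X)\phi Z$. Here I would invoke the skew-symmetry $g(\phi X, Y) = -g(\phi Y, X)$, which follows from \eqref{eq_Lem_Sas1bisbis} together with the symmetry of $g$, to turn this difference into $2g(\phi X, Y)\phi Z$, while the mixed terms $g(\phi X, Z)\phi Y$ and $g(\phi Y, Z)\phi X$ combine with the correct signs. Collecting everything reproduces $\alpha^{-1}\beta(g(\phi Y, Z)\phi X - g(\phi X, Z)\phi Y - 2g(\phi X, Y)\phi Z)$, the claimed formula. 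I expect this final sign and antisymmetry matching to be the main (though routine) obstacle; there is no conceptual difficulty once the quadratic-in-$S$ terms have been discarded on $\mathcal D$.
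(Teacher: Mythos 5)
Your proof is correct and is essentially the same computation as the paper's: both feed the connection difference of Lemma \ref{lemma:nabla} into the curvature tensor, use $(\nabla_X\eta)(Y)=g(\phi X,Y)$, and obtain the factor $2$ from the skew-symmetry of $g(\phi\cdot,\cdot)$ (which the paper packages as $\eta([X,Y])=-d\eta(X,Y)=-2g(\phi X,Y)$). The only difference is bookkeeping: you route the expansion through the general difference-tensor formula and discard the quadratic terms $S_XS_YZ-S_YS_XZ$ on $\mathcal D$, while the paper expands $\tilde\Rm(X,Y)Z$ directly, using that $\tilde\nabla_YZ=\nabla_YZ$ for $Y,Z\in\mathcal D$ and evaluating $\eta(\nabla_YZ)=-g(\phi Y,Z)$.
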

\begin{proof}
 We compute for $X,Y,Z$ in $\mathcal  D$
 \begin{align*}
  \tilde \Rm(X,Y)Z	&= \tilde \nabla_X \nabla_Y Z - \tilde \nabla_Y \nabla_X Z - \tilde \nabla_{[X,Y]} Z \\
			&= \nabla_X \nabla_Y Z - \alpha^{-1} \beta \, \eta(\nabla_Y Z) \phi X - \nabla_Y \nabla_X Z \\ &~~~+ \alpha^{-1} \beta \, \eta(\nabla_X Z) \phi Y - \nabla_{[X,Y]} Z + \alpha^{-1} \beta \, \eta([X,Y]) \phi Z\\
			&= \Rm(X,Y)Z - \alpha^{-1} \beta  \, \eta(\nabla_Y Z) \phi X \\ &~~~+ \alpha^{-1} \beta\,  \eta(\nabla_X Z) \phi Y + \alpha^{-1} \beta \, \eta([X,Y]) \phi Z.
 \end{align*}
 We have that 
\[ 
 \eta(\nabla_Y Z) = Y \eta(Z) - (\nabla_Y \eta)(Z) = -g(\phi Y, Z)
\] 
  and similarly $\eta(\nabla_X Z) = -g(\phi X, Z)$. Moreover, one has 
\[  
  \eta([X,Y]) = -d\eta(X,Y) = -2g(\phi X, Y).
\]  
   So we have
 \begin{align*}
  \tilde \Rm(X,Y)Z &= \Rm(X,Y)Z + \alpha^{-1} \beta  \,  g(\phi Y, Z)\phi X \\&~~- \alpha^{-1} \beta  \,  g(\phi X, Z)\phi Y -2\,\alpha^{-1} \beta  \,  g(\phi X, Y)\phi Z.  
 \end{align*}
\end{proof}

\begin{proof}[Proof of Proposition \ref{prop:loreinst}]
Let $E_i$ be an orthonormal frame with respect to $g$ of $\mathcal D$ and let $\tilde E_i = \frac 1 {\sqrt \alpha} E_i$. We want to compute, for $X, Y$ in $ \mathcal D$
\begin{align*}
\tilde \Ric (X,Y) &= \tilde \Rm (X, \tilde E_i, \tilde E_i, Y) - \tilde \Rm (X, \tilde \xi, \tilde \xi, Y) \\
			&= \tilde g ( \tilde \Rm (X, \tilde E_i) \tilde E_i, Y) - \tilde g(X, Y) \\
			&= g ( \tilde \Rm (X,  E_i)  E_i, Y) - \tilde g(X, Y) \\
			&= \Rm (X,  E_i,  E_i, Y) + \frac{\beta}{\alpha}  \biggl ( -g (\phi X, E_i) g(\phi E_i, Y) - 2 g(\phi X, E_i) g( \phi E_i, Y) \biggr ) - \tilde g(X, Y) \\
			&=  \Rm (X,  E_i,  E_i, Y) + 3  \frac{\beta}{\alpha} g(\phi X, \phi Y)  - \tilde g(X, Y) \\
			&= \Ric (X, Y) - \Rm (X, \xi, \xi, Y) + 3  \frac{\beta}{\alpha} g(\phi X, \phi Y)  - \tilde g(X, Y) \\
			&= A g (X,Y) - g(X,Y) + 3  \frac{\beta}{\alpha} g(\phi X, \phi Y)  - \tilde g(X, Y) \\
			&= \biggl (\frac A \alpha - \frac 1 \alpha + 3  \frac{\beta}{\alpha^2} -1 \biggr) \tilde g(X,Y) \\
			&= \biggl (\frac{A+2}{\alpha} + 2 \biggr ) \tilde g(X,Y).
\end{align*}
Thus we have $A_\alpha = \frac{A+2}{\alpha} + 2$.
 \end{proof}
\subsection{Tanno deformations and Legendrian instability}
 Through the transformation of Proposition \ref{prop:lor} it is easy to see the following.
 
 \begin{prop} \label{prop:minleg}
 Let $L \subset M$ be an $n$-dimensional submanifold. Then it is minimal Legendrian with respect to $(M,g, \eta, \xi, \phi)$ 
if and only if it is also so with respect to $(M,\tilde g_\alpha, \tilde \eta, \tilde \xi, \phi)$.
 \end{prop}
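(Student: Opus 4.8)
The plan is to check the two defining conditions, Legendrian and minimal, separately, and to show that each is unaffected by the deformation.

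For the Legendrian condition I would observe that the deformed contact form is $\tilde \eta = \alpha \eta$ by Proposition \ref{prop:lor}, so $f^* \tilde \eta = \alpha\, f^* \eta$; since $\alpha > 0$, the vanishing of $f^* \eta$ is equivalent to that of $f^* \tilde \eta$. Thus $L$ is Legendrian for $(M, g, \eta, \xi, \phi)$ precisely when it is Legendrian for $(M, \tilde g_\alpha, \tilde \eta, \tilde \xi, \phi)$, and in either case its tangent spaces lie in $\mathcal D = \ker \eta$.

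For minimality I would rely on two observations. First, by \eqref{eq:diffcon} in Lemma \ref{lemma:nabla}, for vector fields $X, Y$ tangent to a Legendrian $L$ one has $\eta(X) = \eta(Y) = 0$, so the correction term drops out and $\tilde \nabla_X Y = \nabla_X Y$; the two ambient Levi-Civita connections therefore agree when differentiating tangent fields of $L$ along tangent directions. Second, the tangent/normal splittings for $g$ and $\tilde g_\alpha$ coincide: for $X$ tangent to $L$ one computes $\tilde g_\alpha(W, X) = \alpha g(W, X) - \beta \eta(W) \eta(X) = \alpha g(W, X)$, so $W$ is $\tilde g_\alpha$-normal to $L$ if and only if it is $g$-normal. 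Hence the normal bundle, and with it the normal projection, is the same for both metrics, and the second fundamental forms agree: $\tilde h = h$.

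It then remains to compare the two mean curvatures. Since $\tilde g_\alpha$ restricts to the constant rescaling $\alpha g$ on $TL$, a $g$-orthonormal frame $\{e_i\}$ of $TL$ becomes the $\tilde g_\alpha$-orthonormal frame $\{\alpha^{-1/2} e_i\}$, and tracing the common second fundamental form gives $\tilde H = \alpha^{-1} H$. As $\alpha > 0$, the mean curvature $H$ vanishes exactly when $\tilde H$ does, so minimality is preserved; combined with the Legendrian equivalence this proves the proposition. The step I expect to be the main obstacle is the identification of the two orthogonal complements; once the normal bundles are seen to coincide, the equality $\tilde h = h$ is immediate from \eqref{eq:diffcon}, and the mean curvature only picks up the conformal factor $\alpha^{-1}$ coming from $\tilde g_\alpha|_{TL} = \alpha g|_{TL}$. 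One should also check that this restriction is positive definite, so that $L$ stays space-like and the splitting $TM = TL \oplus NL$ remains a genuine direct sum even though $\tilde g_\alpha$ is Lorentzian.
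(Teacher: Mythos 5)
Your proof is correct and follows essentially the same route as the paper's (two-line) argument: the contact condition only rescales by $\alpha$, and Lemma \ref{lemma:nabla} shows the two connections agree on tangent fields of a Legendrian, so the second fundamental forms coincide and the mean curvatures vanish simultaneously. Your version is in fact slightly more careful than the paper's, which asserts the mean curvatures are equal, whereas the correct relation is $\tilde H=\alpha^{-1}H$ as you derive --- this of course does not affect the equivalence of minimality.
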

 \begin{proof}
  The contact structure does not change. As for minimality, from Lemma \ref{lemma:nabla} we can write down the difference of the mean curvature vectors which turns out to be zero, 
since the restrictions of $\tilde \nabla$ and $\nabla$ to $L$ coincide.
 \end{proof}
 We emphasize the following  observation.
\begin{remark} \label{rmk:isom}
 The induced metrics $g|_L$ and $\tilde g_\alpha|_L = \alpha g|_L$ on $L$ are homothetic. In particular, their Hodge-de-Rham Laplacians $\Delta_L$ 
are related by $\tilde \Delta_L =\alpha^{-1}  \Delta_L $ and their first eigenvalues via $ \tilde \lambda_1(L) =\alpha^{-1}\lambda_1(L).$
\end{remark}

From Lemma \ref{prop:loreinst}, Remark \ref{rmk:isom} and Proposition \ref{prop:critlambda1} we can infer the following.

\begin{prop}
 Let $(M, g)$ be an $\eta$-Sasaki-Einstein manifold with constant $A$. Then a L-minimal Legendrian submanifold $L$ is Legendrian stable in $g$ if, and only if, it is Legendrian stable in the associated Lorentzian-Sasakian metric $g_\alpha$, for all $\alpha>0$. 
 \end{prop}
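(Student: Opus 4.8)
The plan is to reduce the equivalence to a side-by-side comparison of the two stability criteria supplied by Proposition~\ref{prop:critlambda1}, one for the Riemannian Sasakian structure and one for its Lorentzian Tanno deformation, and then to check that the Einstein constant and the first eigenvalue rescale in exactly the way needed for the two criteria to coincide.

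First I would confirm that Proposition~\ref{prop:critlambda1} actually applies to both structures. By Proposition~\ref{prop:minleg} the submanifold $L$ is minimal Legendrian for $(M,g)$ if and only if it is minimal Legendrian for $(M,\tilde g_\alpha)$, so $H=0$ holds in both cases and the simplified form of the second variation is available. For the Lorentzian structure one must also check space-likeness: by Remark~\ref{rmk:isom} the induced metric is $\tilde g_\alpha|_L=\alpha\,g|_L$ with $\alpha>0$, and since $g|_L$ is positive definite the submanifold $L$ remains space-like. In the Riemannian case $\eps=1$ this is automatic.

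Next I would write the two criteria explicitly. For $(M,g)$ with $\eps=1$ and Einstein constant $A$, Proposition~\ref{prop:critlambda1} states that $L$ is Legendrian stable if and only if $\lambda_1(L)\geq A+2$. For the deformed structure, Proposition~\ref{prop:loreinst} identifies $(M,\tilde g_\alpha)$ as Lorentzian-Sasaki-Einstein with $\eps=-1$ and constant $A_\alpha=\tfrac{A+2}{\alpha}+2$, so the criterion becomes $\tilde\lambda_1(L)\geq A_\alpha+2\eps=A_\alpha-2=\tfrac{A+2}{\alpha}$. Substituting the rescaling $\tilde\lambda_1(L)=\alpha^{-1}\lambda_1(L)$ from Remark~\ref{rmk:isom} and multiplying by $\alpha>0$, this Lorentzian condition is exactly $\lambda_1(L)\geq A+2$. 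The two inequalities agree for every $\alpha>0$, which yields the claimed equivalence.

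I do not expect a genuine analytic difficulty here; the proof is an assembly of results already in place. The only point demanding care is the precise bookkeeping of the sign $\eps=-1$ together with the value $A_\alpha$ and the factor $\alpha^{-1}$ in the eigenvalue rescaling, because it is exactly the two cancellations $\alpha\,(A_\alpha-2)=A+2$ and $\alpha\,\tilde\lambda_1(L)=\lambda_1(L)$ that force the criteria to match; a single sign slip would destroy the equivalence. For this reason I would keep the space-likeness verification explicit, since without it Proposition~\ref{prop:critlambda1} could not legitimately be invoked on the Lorentzian side.
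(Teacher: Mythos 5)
Your proposal is correct and follows exactly the route the paper intends: the paper's ``proof'' is the single sentence that the result follows from Proposition~\ref{prop:loreinst}, Remark~\ref{rmk:isom} and Proposition~\ref{prop:critlambda1}, and your computation $\tilde\lambda_1(L)\geq A_\alpha-2=\tfrac{A+2}{\alpha}\iff\lambda_1(L)\geq A+2$ is precisely the intended assembly of those three facts. Your additional explicit checks (minimality via Proposition~\ref{prop:minleg} and space-likeness of $L$ for $\tilde g_\alpha$) are sound and, if anything, make the argument more complete than the paper's.
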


\subsection*{Acknowledgements} 
The authors would like to thank the referee for his/her useful comments and remarks. They were supported by the Research Training Group 1463 ``Analysis, Geometry and String Theory'' of the DFG and the first author is supported as well by the GNSAGA of INdAM. 
They also would like to thank Fabio Podest\`a for his interest in their work.

\bibliography{biblio}
\bibliographystyle{amsplain}
\end{document}